\author[Jian Liu]{Jian Liu}
\address{School of Mathematical Sciences, Shanghai Jiao Tong University, Shanghai 200240, P.R. China. }
\email{liuj231@sjtu.edu.cn}
\keywords{periodic complex, orbit category, triangulated hull, derived category, derived equivalence, dg category, Koszul duality}
\subjclass[2020]{18G80 (primary); 16E45, 18E20, 18G35 (secondary)}
\renewcommand{\S}{{\mathcal{S}}}
\DeclareMathOperator{\dg}{dg}
\DeclareMathOperator{\h}{H}
\newcommand{\Z}{\mathbb{Z}}
\newcommand{\B}{\mathcal{B}}
\newcommand{\A}{\mathcal{A}}
\newcommand{\T}{\mathcal{T}}
\newcommand{\D}{\mathsf{D}}
\newcommand{\K}{\mathsf{K}}
\newcommand{\C}{\mathsf{C}}
\newcommand{\del}{\partial}
\DeclareMathOperator{\Ima}{Im}
\DeclareMathOperator{\Ker}{Ker}
\DeclareMathOperator{\com}{c}
\DeclareMathOperator{\id}{id}
\DeclareMathOperator{\ac}{ac}
\DeclareMathOperator{\Hom}{Hom}
\DeclareMathOperator{\Y}{Y}
\DeclareMathOperator{\thick}{\mathsf{thick}}
\DeclareMathOperator{\Thick}{\mathsf{Thick}}
\newcommand{\can}{\mathsf{can}}
\newcommand{\per}{\mathsf{per}}
\newcommand{\Mod}{\mathsf{Mod}}
\newcommand{\Inj}{\mathsf{Inj}}
\newcommand{\mo}{\mathsf{mod}}
\newcommand{\Gr}{\mathsf{Gr}}
\newcommand{\gr}{\mathsf{gr}}
\newcommand{\Proj}{\mathsf{Proj}}
\newcommand{\GrInj}{\mathsf{GrInj}}
\newtheorem{theorem}{Theorem}[section]
\newtheorem{proposition}[theorem]{Proposition}
\newtheorem{lemma}[theorem]{Lemma}
\newtheorem{corollary}[theorem]{Corollary}
\theoremstyle{definition}
\newtheorem{example}[theorem]{Example}
\newtheorem{remark}[theorem]{Remark}
\newtheorem{chunk}[theorem]{}
\newtheorem*{ack}{Acknowledgements}
\newtheorem{Thm}{Theorem}
\title{Triangulated categories of periodic complexes and orbit categories}
\date{\today}
\begin{document}
\maketitle
\begin{abstract}
   We investigate the triangulated hull of the orbit categories of the perfect derived category and the bounded derived category of a ring concerning the power of the suspension functor. It turns out that the triangulated hull will correspond to the full subcategory of compact objects of certain triangulated categories of periodic complexes. This specializes to Stai and Zhao's result when the ring is a finite dimensional algebra with finite global dimension over a field. As the first application, if $A,B$ are flat algebras over a commutative ring and they are derived equivalent, then the corresponding derived categories of $n$-periodic complexes are triangle equivalent. As the second application, we get the periodic version of the Koszul duality.
\end{abstract}

\section*{Introduction}
 Given an additive category $\A$ and an integer $n\geq 1$, a complex $(X,\del_X)$
over $\A$ is called $n$-periodic if $X^i=X^{i+n}$ and $\del_X^i=\del_X^{i+n}$ for all $i$.
A chain map $f$ between $n$-periodic complexes is a $n$-periodic morphism  if 
$f^i=f^{i+n}$ for all $i$.
A $1$-periodic complex is just a differential object which first appeared in Cartan and Eilenberg's book \cite{CE}. It was systematically studied by Avramov, Buchweitz and Iyengar \cite{ABI}.
Two morphisms $f,g\colon X\rightarrow Y$ of $n$-periodic complexes are called homotopic if there is a homotopy map  $\{\sigma^i\colon X^i\rightarrow Y^{i-1}\}_{i\in \Z}$ from $f$ to $g$ such that $\sigma^i=\sigma^{i+n}$ for all $i$.
Then one can form the homotopy category $\K_n(\A)$ of $n$-periodic complexes and the  derived category $\D_n(\A)$  of $n$-periodic complexes when $\A$ is abelian. They are both triangulated categories;  see \cite{PX1997} or Section \ref{Section 2}. 

Let $R$ be a left noetherian ring. In this article, we will focus on studying the homotopy category $\K_n(R\text{-}\Inj)$ of $n$-periodic complexes of injective $R$-modules and the derived category $\D_n(R\text{-}\Mod)$ of $n$-periodic complexes of  $R$-modules.
For a complex $X$ of $R$-modules, one can associate a $n$-periodic complex $\Delta(X)$:
$$
 \cdots\longrightarrow \coprod_{j\equiv i-1  (\mo ~n)}X^j\longrightarrow\coprod_{j\equiv i (\mo ~n)} X^j\longrightarrow \coprod_{j\equiv i+1 (\mo~ n)}X^j\longrightarrow\cdots;
$$
this process is called \emph{compression} in \cite{ABI}. As expected, the triangulated category of periodic complexes and the classical triangulated category are closely linked by $\Delta$. It is known to experts that the homotopy category $\K(R\text{-}\Inj)$ of complexes of injective $R$-modules and the derived category $\D(R\text{-}\Mod)$ of complexes of $R$-modules are compactly generated; the first one is due to Krause \cite{Krause}. Inspired by this, we prove that $\K_n(R\text{-}\Inj)$ and $\D_n(R\text{-}\Mod)$ are compactly generated; see Theorem \ref{recollement}. Moreover, the full subcategories of compact objects of these two categories are the triangulated hull of certain orbit categories; see Theorem \ref{t1} in the introduction. It is also proved in Theorem \ref{recollement} that the canonical functor $\K_n(R\text{-}\Inj)\rightarrow \D_n(R\text{-}\Mod)$ induces a recollement.

\vspace{10pt}
Let $T\colon \A\rightarrow \A$ be an autoequivalence. Following \cite{Keller05}, the \emph{orbit category} $\A/T$ is defined as follows: it has the same objects as $\A$ and the morphism spaces 
$$
\Hom_{\A/T}(X,Y):=\coprod_{i\in \Z}\Hom_{\A}(X,T^iY).
$$
The composition in $\A/T$ is defined in a natural way. As the name suggests, the objects in the same $T$-orbit are isomorphic. 

If $\T$ is a triangulated category with suspension functor $\Sigma$, there is a natural question: does $\T/\Sigma^n$ inherit a  triangulated structure from $\T$ such that the projection functor $\T\rightarrow \T/\Sigma^n$ is exact?  Let $R$ be a finite dimensional hereditary algebra over a field. Peng and Xiao \cite{PX1997} observed the orbit category $\D^b(R\text{-}\mo)/[2]$ of the bounded derived category of finitely generated $R$-modules, introduced by Happel \cite{Happel87} under the name ``root category", is triangulated. Indeed, they proved that it is equivalent to the homotopy category of 2-periodic complexes of finitely generated projective $R$-modules. This established for the first time a link between the orbit category and the triangulated category of periodic complexes. By making use of this triangulated structure, they constructed the so-called Ringel-Hall Lie algebra determined by $\D^b(R\text{-}\mo)/[2]$ and gave a realization of
all symmetrizable Kac-Moody Lie algebras; see \cite{PX2000}.


However, Neeman found the answer to the above question is negative; see discussions in  \cite{Keller05}. Inspired by questions from the cluster category in \cite{BMRRT}, Keller \cite{Keller05} constructed the \emph{triangulated hull} of certain orbit categories. As an application, he proved the cluster category in \cite{BMRRT} is triangulated.

Keller's construction is an abstract embedding of the certain orbit category into the triangulated hull. It will be nice to know the precise triangulated hull of a given orbit category. If $R$ is a finite dimensional algebra with finite global dimension over a field, it was independently proved by Stai \cite{Stai} and Zhao \cite{Zhao} that $\D^b(R\text{-}\mo)/[n]$ embeds into its triangulated hull $\D_n(R\text{-}\mo)$, where $R\text{-}\mo$ is the category of finitely generated $R$-modules.
We are motivated by the natural question: \emph{what is the triangulated hull of $\D^b(R\text{-}\mo)/[n]$ without these assumptions of $R$?}

Our first result Theorem \ref{t1} answers this question.  It extends Stai and Zhao's result; see Corollary \ref{SZ}. 

Recall the perfect derived category $\per (R)$ is the full subcategory of $\D(R\text{-}\Mod)$ formed by complexes that are quasi-isomorphic to bounded complexes of finitely generated projective $R$-modules. Recall the embedding $\mathsf{i}\colon \D^b(R\text{-}\mo)\rightarrow \K(R\text{-}\Inj)$ induced by taking injective resolution; see \ref{compact object in K(Inj R)}. For a triangulated category $\T$ with corpoducts, an object $X$ is called compact if $\Hom_{\T}(X,-)$ preserves coproducts. We let $\T^{\com}$ denote the full subcategory of $\T$ formed by compact objects and note that $\T^{\com}$ is a thick subcategory of $\T$.

\vspace{3pt}
\begin{Thm}\label{t1}
(see \ref{embedding in hull}) Let $R$ be a left noetherian ring. 
Induced by the compression of complexes, the functors
$$
\overline{\Delta\circ \mathsf{i}}\colon \D^b(R\text{-}\mo)/[n]\longrightarrow \K_n(R\text{-}\Inj )^{\com}\text{ and }~\overline{\Delta}\colon\per(R)/[n]\longrightarrow \D_n(R\text{-}\Mod )^{\com}
$$
are the embedding of the orbit categories into their triangulated hull.
\end{Thm}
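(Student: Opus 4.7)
The plan is to verify the universal characterization of Keller's triangulated hull from \cite{Keller05}: for an algebraic triangulated category $\mathcal{T}$ with autoequivalence $F$, a triangulated category $\mathcal{U}$ together with an exact functor $\mathcal{T} \to \mathcal{U}$ inverting $F$ is the triangulated hull of $\mathcal{T}/F$ provided (a) the induced functor $\mathcal{T}/F \to \mathcal{U}$ is fully faithful, (b) its image generates $\mathcal{U}$ as a thick subcategory, and (c) the dg enhancements are compatible. The theorem thus reduces to verifying (a)--(c) for both $\overline{\Delta \circ \mathsf{i}}$ and $\overline{\Delta}$. That each functor descends to the orbit category is immediate: $\Delta(X[n]) = \Delta(X)$ on the nose, because shifting by $n$ merely reindexes the coproduct $\coprod_{j \equiv i \,(\mo\ n)} X^j$ that defines $\Delta$ in each compressed degree.

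For (a), the key step is the natural isomorphism
$$
\Hom_{\K_n(R\text{-}\Inj)}(\Delta \mathsf{i} X, \Delta \mathsf{i} Y) \;\cong\; \bigoplus_{i \in \Z} \Hom_{\D^b(R\text{-}\mo)}(X, Y[ni]).
$$
I would prove this at the chain level: a morphism $\coprod_j X^j \to \coprod_k Y^k$ (with $j \equiv k \equiv i$ mod $n$) decomposes into block components $X^j \to Y^k$ whose nonvanishing is controlled by the shift $k-j \in n\Z$, and $n$-periodicity forces the blocks to assemble into a single direct-sum decomposition indexed by $i \in \Z$; homotopies decompose in parallel. Boundedness of $X$ keeps the sum in each compressed degree finite, and the injectivity of $\mathsf{i}(Y)$ lets one pass from homotopy classes in $\K_n(R\text{-}\Inj)$ to derived Hom in $\D^b(R\text{-}\mo)$. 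An analogous argument, with perfect complexes replacing bounded complexes and the perfect structure replacing injective resolutions, handles $\overline{\Delta}$. For (b), I would invoke Theorem \ref{recollement}: a compact generator of $\K_n(R\text{-}\Inj)$ (resp.\ $\D_n(R\text{-}\Mod)$) is obtained by compressing a compact generator of $\K(R\text{-}\Inj)$ (resp.\ $\D(R\text{-}\Mod)$), which lies in the thick closure of $\mathsf{i}(\D^b(R\text{-}\mo))$ (resp.\ $\per(R)$); hence the image of each orbit category generates the respective compact subcategory. Compatibility (c) is inherited from the standard dg enhancement on complexes of $R$-modules, which is preserved by $\Delta$ viewed as a dg functor.

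The main technical obstacle is the Hom computation in (a). Without the assumption of finite global dimension (as in Stai--Zhao), one must handle the passage between homotopy categories, injective resolutions, and compressed complexes delicately, and verify that the block decomposition at the chain level truly yields a \emph{direct sum} and not merely a product of Hom groups in each degree. The left noetherian hypothesis ensures that coproducts of injectives behave as expected and that the relevant sums remain finite after compression, and the use of the injective-resolution embedding $\mathsf{i}$ is essential to avoid the obstructions that arise in more general homotopy categories. Once (a)--(c) are in hand, the identification of $\K_n(R\text{-}\Inj)^{\com}$ and $\D_n(R\text{-}\Mod)^{\com}$ with the triangulated hulls of the respective orbit categories follows directly from Keller's characterization.
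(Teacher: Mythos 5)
There are two genuine gaps. First, your verification of (a) for the functor $\overline{\Delta\circ\mathsf{i}}$ leans on ``boundedness of $X$'' to keep the compressed degrees finite, but when $R$ has infinite global dimension the injective resolution $\mathsf{i}(X)$ is only bounded below, so each degree of $\Delta(\mathsf{i}X)$ and $\Delta(\mathsf{i}Y)$ is an infinite coproduct of (typically non-finitely generated) injectives; a chain-level block decomposition then does not show that the Hom group is the direct sum $\coprod_{i}\Hom(X,Y[ni])$ rather than some larger product-like group. This is exactly the case the theorem is meant to cover beyond Stai--Zhao. The paper avoids the chain level entirely: in Proposition \ref{embedding} one uses the adjunction $(\Delta,\nabla)$, the isomorphism $\nabla\Delta(Y)\cong\coprod_{i\in\Z}Y[ni]$, and the fact that $\mathsf{i}(X)$ is \emph{compact} in $\K(R\text{-}\Inj)$ (Krause's identification \ref{compact object in K(Inj R)}), so that $\Hom_{\K_n}(\Delta X,\Delta Y)\cong\Hom_{\K}(X,\coprod_i Y[ni])\cong\coprod_i\Hom_{\K}(X,Y[ni])$. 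Your argument as stated would only be safe for the perfect-complex side.

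Second, and more fundamentally, your step (c) is where the actual content of the theorem lives, and it is asserted rather than proved. Keller's triangulated hull is defined through the dg orbit category: one must identify $\K_n(R\text{-}\Inj)^{\com}$ with $\D\bigl(\C^{+,f}_{\dg}(R\text{-}\Inj)/[n]\bigr)^{\com}$ (and $\D_n(R\text{-}\Mod)^{\com}$ with $\D(\per_{\dg}(R)/[n])^{\com}$). Conditions (a) and (b) at the triangulated level, plus an unspecified ``compatibility of dg enhancements,'' do not constitute such an identification; there is no purely triangulated universal property one can invoke here. The paper's proof of Theorem \ref{main result} supplies precisely this missing comparison: using the universal property of the dg orbit category (Lemma \ref{induce a functor}, Example \ref{example}) it builds a functor $\Phi\colon\K_n(R\text{-}\Inj)\to\D(\C^{+,f}_{\dg}(R\text{-}\Inj)/[n])$ from restricted Hom functors $I\mapsto\overline{\Hom_R(-,I)}$, checks that $\Phi$ preserves coproducts (again via compactness of the objects of $\C^{+,f}_{\dg}(R\text{-}\Inj)$ in $\K(R\text{-}\Inj)$), verifies the square relating $\Phi$, the Yoneda embedding and $\overline{\Delta}$, and concludes by the compact-generation criterion (Lemma \ref{test equivalence}). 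If you want to salvage your outline, you would have to upgrade (a) to a quasi-fully-faithful dg functor from the dg orbit category into a dg enhancement of $\K_n(R\text{-}\Inj)$ (i.e.\ isomorphisms on all cohomologies of Hom complexes) whose image compactly generates, and then identify compacts; that is essentially the paper's argument in the opposite direction, not a consequence of (a)--(b) plus a general principle.
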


In order to prove Theorem \ref{t1}, we realize $\K_n(R\text{-}\Inj)$ and $\D_n(R\text{-}\Mod)$ as derived categories of orbit categories of certain dg categories; see Theorem \ref{main result}.

\vspace{10pt}
Motivated by Theorem \ref{t1}, we compare the triangle equivalences $\D(A\text{-}\Mod)\simeq \D(B\text{-}\Mod)$ and $\D_n(A\text{-}\Mod )\simeq \D_n(B\text{-}\Mod)$ for two rings $A, B$ in Section \ref{Section 4}. Two rings that satisfy the first equivalence are called derived equivalent.
In general, whether two rings are derived equivalent is difficult to grasp. Therefore, it is important to investigate the invariant under the derived equivalence. By introducing the tilting complex, Rickard \cite{Rickard1989} established the derived Morita theory of rings. After that, Keller \cite{Keller94} generalized Rickard's derived Morita theory through the language of differential graded categories. 

 It turns out that the above two equivalences are closely related; see Proposition \ref{derived equivalence}.   In particular, combine Proposition \ref{derived equivalence} with \cite{Keller94}, we get the following result which extends a result of Zhao \cite{Zhao}; see Corollary \ref{finite global}. 

\vspace{3pt}
\begin{Thm}\label{t2} (see \ref{implication})
Let $k$ be a commutative ring and $A,B$ be flat $k$-algebras. If $A$ and $B$ are derived equivalent, then $\D_n(A\text{-}\Mod)$ and $\D_n(B\text{-}\Mod)$ are equivalent as triangulated categories.
\end{Thm}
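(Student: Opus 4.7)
The plan is to reduce the statement to classical dg Morita theory and then invoke Proposition \ref{derived equivalence} to transport the resulting equivalence across the compression construction.

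First, I would appeal to Keller's theorem in \cite{Keller94}: since $A$ and $B$ are flat over $k$, the assumed derived equivalence $\D(A\text{-}\Mod)\simeq \D(B\text{-}\Mod)$ is automatically \emph{standard}, i.e.\ isomorphic to $-\ot_A T$ for a two-sided tilting complex $T$, which we may view as a dg $A$-$B$-bimodule whose derived endomorphism object $\RHom_A(T,T)$ is quasi-isomorphic to $B$. Equivalently, one obtains a zigzag of quasi-isomorphisms relating the dg algebras $A$ and $B$. The flatness hypothesis is essential here: without it, the derived tensor $A\ot_k B^{\mathrm{op}}$ does not represent the expected bimodule category and the dg-bimodule framework breaks down.

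Next, I would apply Proposition \ref{derived equivalence}, whose hypothesis is precisely that the given derived equivalence is standard, hence is satisfied after the previous step. I expect that proposition to be established via the dg-categorical realization of $\D_n(R\text{-}\Mod)$ from Theorem \ref{main result}: the tilting bimodule $T$, when composed with the compression functor, yields a tilting object between the appropriate $n$-periodic dg enhancements of $A$ and $B$, and derived tensoring with it supplies the equivalence $\D_n(A\text{-}\Mod)\simeq \D_n(B\text{-}\Mod)$. A cleaner conceptual route is provided by Theorem \ref{t1}: the bimodule $T$ induces an equivalence of orbit categories $\per(A)/[n]\simeq \per(B)/[n]$, which lifts to their triangulated hulls $\D_n(A\text{-}\Mod)^{\com}\simeq \D_n(B\text{-}\Mod)^{\com}$ by the universal property, and then extends to the entire compactly generated categories.

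The main obstacle will be verifying that the tilting property is preserved under compression, i.e.\ that the derived endomorphism algebra of the compressed bimodule over the $n$-periodic enhancement of $A$ still recovers the corresponding $n$-periodic enhancement of $B$. This compatibility is where both the flatness hypothesis and the dg-categorical framework of the earlier sections must be used in full force, and it is precisely the content that Proposition \ref{derived equivalence} is designed to package; once that proposition is in hand, the present theorem is a direct consequence of Keller's structure result.
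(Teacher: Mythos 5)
Your proposal is correct and follows essentially the same route as the paper: the flatness hypothesis enters exactly as you say, via Keller's (and Rickard's) result that the derived equivalence is standard, i.e.\ given by derived tensoring with a complex of bimodules, after which Proposition \ref{derived equivalence} immediately yields the equivalence $\D_n(A\text{-}\Mod)\simeq\D_n(B\text{-}\Mod)$. (Your guess about the internals of Proposition \ref{derived equivalence} differs from the paper's actual argument, which uses the commutative square relating $\otimes^{\mathsf{L}}$ and $\boxtimes^{\mathsf{L}}$ through compression together with Proposition \ref{embedding} and Lemma \ref{induce equivalence}, rather than a derived endomorphism computation, but since you invoke that proposition as a black box this does not affect the proof.)
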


\vspace{3pt}
In the last section, we give the \emph{periodic version of the Koszul duality} (Theorem \ref{t3}). Its proof relies on the classical Koszul duality and the studies in previous sections.

Let $k$ be a field and $S=k[x_1,\ldots,x_c]$, where $\mathrm{deg}(x_i)=1$. Denote by $\Lambda$ the graded exterior algebra over $k$ on variables $\xi_1,\ldots,\xi_c$ of degree $-1$. Bernstein, Gel'fand and Gel'fand \cite{BGG} established the  triangle equivalence 
$
\D^b(\Lambda\text{-}\gr)\simeq \D^b(S\text{-}\gr)
$
between the bounded derived category of finitely generated graded modules. This is known as the BGG correspondence.

The BGG correspondence can be lifted to the compact completions. That is, there is a triangle equivalence
$
\K(\Lambda\text{-}\GrInj)\simeq\D(S\text{-}\Gr)
$
(see \cite{Krause} or \ref{classical BGG}), where $A\text{-}\Gr$ is the category of graded modules over the graded algebra $A$ and $\Lambda\text{-}\GrInj$ is the full subcategory of $\Lambda\text{-}\Gr$ formed by injective objects. The corresponding equivalence will be called the Koszul duality. The Koszul duality phenomenon has played an important role in representation theory. For instance,  the DG version of the Koszul duality was used by Benson, Iyengar and Krause \cite{BIK} to stratify the modular representation theory of finite groups.

\vspace{3pt}
\begin{Thm}\label{t3}(see \ref{Koszul duality} and \ref{version of BGG})
There is a triangle equivalence 
$$ 
\K_n(\Lambda\text{-}\GrInj )\stackrel{\sim}\longrightarrow \D_n(S\text{-} \Gr ).
$$
Moreover, this equivalence induces an embedding
$$
\D^b(\Lambda\text{-}\gr)/[n]\longrightarrow \D_n(S\text{-}\gr)
$$ 
of the orbit category into its triangulated hull.
\end{Thm}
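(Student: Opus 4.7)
The plan is to lift the classical Koszul duality $\K(\Lambda\text{-}\GrInj)\simeq\D(S\text{-}\Gr)$ to the $n$-periodic setting by passing through the dg-categorical realization of $\K_n$ and $\D_n$ established earlier in the paper (Theorem \ref{main result}), and then to deduce the embedding of orbit categories by combining this with Theorem \ref{t1}.

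For the first equivalence, I begin by noting that the classical Koszul duality is represented by a Koszul bimodule and thus lifts to a quasi-equivalence of the natural dg enhancements $\CC_\Lambda$ and $\CC_S$ of $\K(\Lambda\text{-}\GrInj)$ and $\D(S\text{-}\Gr)$, respectively; moreover, this quasi-equivalence intertwines the suspension $\Sigma$ on both sides. The graded analog of Theorem \ref{main result} realizes $\K_n(\Lambda\text{-}\GrInj)\simeq\D(\CC_\Lambda/\Sigma^n)$ and $\D_n(S\text{-}\Gr)\simeq\D(\CC_S/\Sigma^n)$, where the quotients are taken in the dg sense. Descending the quasi-equivalence to the orbit dg categories and taking derived categories yields the desired triangle equivalence.

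For the embedding, I apply the graded analog of Theorem \ref{t1} on the $\Lambda$ side to produce the embedding $\D^b(\Lambda\text{-}\gr)/[n]\hookrightarrow \K_n(\Lambda\text{-}\GrInj)^{\com}$ of the orbit category into its triangulated hull. Composing with the equivalence from the first part, the image lands in $\D_n(S\text{-}\Gr)^{\com}$. Since $S$ has finite graded global dimension, Theorem \ref{t1} on the $S$ side identifies $\D_n(S\text{-}\Gr)^{\com}$ with the triangulated hull of $\per(S)/[n]$, which in this case coincides with $\D^b(S\text{-}\gr)/[n]$ and whose hull sits inside $\D_n(S\text{-}\gr)$. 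Tracing the classical BGG correspondence through these identifications shows that the composite is precisely the asserted embedding $\D^b(\Lambda\text{-}\gr)/[n]\to\D_n(S\text{-}\gr)$.

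The principal obstacle is ensuring that the dg enhancement of classical Koszul duality genuinely commutes with $\Sigma^n$ at the dg level, and not just up to natural isomorphism in the homotopy category, so that it descends cleanly to the orbit dg categories $\CC_\Lambda/\Sigma^n$ and $\CC_S/\Sigma^n$. This reduces to writing Koszul duality via an explicit bimodule and verifying suspension-equivariance at the chain level. A secondary technical point is the graded analog of Theorem \ref{main result}: the recollement and compact generation arguments must be adapted to handle simultaneously the internal grading on $\Lambda$ and $S$ and the $n$-periodic homological grading, though this ought to be a largely routine translation of the ungraded proof.
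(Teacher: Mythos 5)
Your overall strategy for the first equivalence (descend a dg lift of Koszul duality to orbit dg categories and identify the resulting derived categories with the periodic categories) is genuinely different from the paper's, but as written it has a real misstep: Theorem \ref{main result} (and its graded analogue, Remark \ref{Grothendieck category}) does \emph{not} use dg enhancements of the big categories $\K(\Lambda\text{-}\GrInj)$ and $\D(S\text{-}\Gr)$. The dg categories appearing there are enhancements of the \emph{compact} objects, namely $\C^{+,f}_{\dg}(\Lambda\text{-}\GrInj)$ (enhancing $\D^b(\Lambda\text{-}\gr)\simeq\K(\Lambda\text{-}\GrInj)^{\com}$) and the graded analogue of $\per_{\dg}$; the derived category of the orbit of an enhancement of the whole compactly generated category would be far too large. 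So the identifications $\K_n(\Lambda\text{-}\GrInj)\simeq\D(\CC_\Lambda/\Sigma^n)$, $\D_n(S\text{-}\Gr)\simeq\D(\CC_S/\Sigma^n)$ are only correct after replacing $\CC_\Lambda,\CC_S$ by these small dg categories, and your dg lift of Koszul duality must then be a quasi-equivalence between \emph{those}, commuting with $[n]$ up to a \emph{closed} natural isomorphism (Lemma \ref{induce a functor}); you would also need to justify that such a quasi-equivalence induces an equivalence after passing to orbit dg categories and their derived categories. All of this is plausible (the explicit BGG functor is a dg functor), but note that the paper sidesteps your ``principal obstacle'' entirely: it never lifts Koszul duality to a dg equivalence. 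Instead it extends the explicit functor $\Phi$ degreewise to $n$-periodic complexes to get $\Phi'$, observes the square with the compression $\Delta$ commutes and that $\Phi'\circ\mathsf{p}'$ preserves coproducts, and then Proposition \ref{embedding} together with Lemma \ref{induce equivalence} (equivalences descend along compression once both periodic categories are compactly generated by the images of the orbit embeddings) yields Theorem \ref{Koszul duality} directly. That mechanism is lighter than your dg descent and is the reason the classical equivalence $\Phi\circ\mathsf{p}$ of \ref{classical BGG} suffices as input.

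For the second assertion your outline matches the paper's proof of Corollary \ref{version of BGG}: the graded version of Theorem \ref{t1} (Remark \ref{Grothendieck category}) gives $\D^b(\Lambda\text{-}\gr)/[n]\hookrightarrow\K_n(\Lambda\text{-}\GrInj)^{\com}$, and one composes with the equivalence of compacts from the first part. The one step you leave vague (``whose hull sits inside $\D_n(S\text{-}\gr)$'') is exactly the point that needs an argument: one must identify $\D_n(S\text{-}\Gr)^{\com}$ with $\D_n(S\text{-}\gr)$, which the paper does via \ref{finite global dimension case} (the graded form of Stai's finite-flag argument showing $\D_n(S\text{-}\gr)=\thick(\{\Delta(S(i))\})$, which equals the compacts by Proposition \ref{embedding} and \ref{Neeman's description of compact objects}). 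Make that identification explicit; with it, and with the dg categories in your first part corrected as above, your route goes through.
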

\vspace{3pt}
\begin{ack}
Part of the work was done during the author's visit to the University of Utah. The author would like to thank Srikanth Iyengar for his kind hospitality and the China Scholarship Council for their financial support. Special thanks to Benjamin Briggs for providing a similar project related to Section \ref{Section 5} which makes the article possible. The author thanks Xiao-Wu Chen, Srikanth Iyengar, Janina Letz, and Josh Pollitz for their discussions on this work.
\end{ack}

\section{Notations and Preliminaries}
 Throughout the article, $R$ is a left noetherian ring. $R\text{-}\Mod$ (resp. $R\text{-}\mo$) will be the category of left (resp. finitely generated left) $R$-modules. The full subcategory of $R\text{-}\Mod$ consisting all projective (resp. injective) $R$-modules is denoted by $R\text{-}\Proj$ (resp. $R\text{-}\Inj $).
For an additive category $\A$,  $\C(\A)$ will be the category of complexes over $\A$ with suspension functor $[l]$ ($X[l]^i:=X^{i+l},~ \del_{X[l]}^i:=(-1)^l\del_X^{i+l}$). Denote by $\K(\A)$ the homotopy category of complexes over $\A$. When $\A$ is abelian, let $\D(\A)$ denote the derived category of complexes over  $\A$.

A complex of $R$-modules is \emph{perfect} provided that it is quasi-isomorphic to a bounded complex of finitely generated projective $R$-modules. $\per (R)$ will be the full subcategory of $\D(R\text{-}\Mod)$ consisting of all perfect complexes. 

\begin{chunk}
\textbf{Thick subcategories and localizing subcategories.} Let $\T$ be triangulated category and $\mathcal{C}$ be a triangulated subcategory of $\T$. We say $\mathcal{C}$ is \emph{thick} (resp. \emph{localizing}) if it is closed under direct summands (resp. coproducts).  
For a set $S$ of object in $\T$, we let $\thick_{\T}(S)$ denote the smallest thick subcategories of $\T$ containing $S$. This can be realized as the intersection of all thick subcategories of $\T$ containing $S$; it has an inductive construction (see \cite[2.2.4]{ABIM}).

If $\T$ has coproducts, then a technique of Eilenberg's swindle implies that any localizing subcategory is thick.
\end{chunk}
It is well-known that $\per (R)=\thick_{\D(R\text{-}\Mod)}(R)$ is thick; see \cite[Lemma 1.2.1]{Buchweitz}.
\begin{chunk}
Let $F\colon \T\rightarrow\T^\prime$ be an exact functor between triangulated categories. Then the \emph{kernel} of $F$ defined by
$$
\Ker F:=\{X\in \T\mid F(X)\cong 0\}
$$
is a thick subcategory of $\T$. When the functor $F$ is full, the \emph{essential image} of $F$ defined by 
$$
\Ima F:=\{Y\in \T^\prime\mid Y\cong F(X) \text{ for some } X\in \T\}
$$
is a triangulated subcategory of $\T^\prime$.
\end{chunk}
\begin{chunk}\label{def of recollement}
\textbf{Recollement.} Following Beilinson, Bernstein and Deligne \cite{BBD}, we call a diagram 
\[\begin{tikzcd}
\T^\prime\arrow[rr,"{i_*}"]&&\T \arrow[ll,bend right,"{i^*}"swap]\arrow[ll,bend left,"{i^!}"swap]\arrow[rr,"{j^\ast}"]&& \T^{\prime\prime}\arrow[ll, bend right, "{j_!}"swap]\arrow[ll,bend left, "{j_\ast}"swap]
\end{tikzcd}
\]
of triangulated categories and exact functors \emph{a recollement} if the following conditions are satisfied.

(1) $(i^\ast,i_\ast),~(i_\ast,i^!),~(j_!,j^\ast)$ and $(j^\ast,j_\ast)$ are adjoint pairs.

(2) $i_\ast,~j_!$ and $j_\ast$ are fully faithful.

(3) $\Ima i_\ast=\mathrm{Ker} j^\ast$, that is, $j^\ast(X)=0$ if and only if $X\cong i_\ast (Y)$ for some $Y\in \T^\prime.$
\end{chunk}
Next, we record a useful result; its proof can refer \cite[Section 3]{Krause}.
\begin{chunk}\label{inclusion has right iff quo has}
Let $\S$ be a thick subcategory of $\T$. Then the inclusion functor $\S\rightarrow \T$ has a right (resp. left) adjoint if and only if the quotient functor $Q\colon\T\rightarrow \T/\S$ has a right (resp. left) adjoint. In this case,  the right (resp. left) adjoint of the quotient functor is fully faithful. The sequence $\S\xrightarrow {\mathrm{inc}} \T\xrightarrow Q\T/\S$ in this case is called a \emph{localization sequence} (resp. \emph{colocalization sequence}).

Assume the sequence $\S\xrightarrow {\mathrm{inc}} \T\xrightarrow Q\T/\S$ is a localization sequence. Denote by $\pi$ (resp. $\iota$) the right adjoint of the functor $\mathrm{inc}$ (resp. $Q$). Then the sequence 
$
\T/\S\xrightarrow{\iota}\T\xrightarrow{\pi} \S
$
is a colocalization sequence. In particular, $\pi$ induces a triangle equivalence 
$$
\T/\Ima\iota\stackrel{\sim} \longrightarrow\S.
$$
\end{chunk}
Note that a sequence $\T^\prime\rightarrow \T\rightarrow \T^{\prime\prime}$ induces a recollement as \ref{def of recollement} if and only if the sequence is both a localization sequence and a colocalization sequence.
\begin{chunk}\label{Neeman's description of compact objects}
\textbf{Compactly generated triangulated categories.} Let $\T$ be a triangulated category with coproducts. An object $X\in \T$ is called \emph{compact} provided that the Hom functor $\Hom_{\T}(X,-)$ commutes with coproducts. That is, for any class of objects $Y_i(i\in I)$ in $\T$, the canonical map 
$$
\can\colon\coprod_{i\in I}\Hom_{\T}(X,Y_i)\longrightarrow  \Hom_{\T}(X,\coprod_{i\in I}Y_i)
$$
is isomorphic. We let $\T^{\com}$ denote the full subcategory of $\T$ formed by compact objects in $\T$. It is not hard to show that $\T^{\com}$ is a thick subcategory of $\T$.

$\T$ is said to be \emph{compactly generated} if there exists a set $S$ of compact objects such that any object $Y$ satisfying $\Hom_{\T}(X,Y[i])=0$ for all $X\in S$ and $i\in \Z$ is a zero object; the condition is equivalent to $\T$ is equal to the smallest localizing subcategory containing $S$ (see \cite[Lemma 3.2]{Neeman96}). In this case, $\T^{\com}=\thick_{\T}(S)$; see \cite[Lemma 2.2]{Neeman92}. For instance, $\D(R\text{-}\Mod)$ is compactly generated by the compact object $R$.
\end{chunk}

A set $S$ of objects in $\T$ is called a \emph{compact generating set} provided that $S\subseteq \T^{\com}$ and $\T$ is compactly generated by $S$.
The following result is well-known. For its proof, we refer the reader to \cite[Lemma 4.5]{BIK}; compare \cite[Lemma 1]{Beilinson} and \cite[Lemma 4.2]{Keller94}.
\begin{lemma}\label{test equivalence}
Let $F\colon \T\rightarrow \T^\prime$ be an exact functor between compactly generated triangulated categories. Assume $F$ preserves coproducts and $S\subseteq \T^{\com}$ is a compact generating set. Then $F$ is fully faithful if and only if the induced maps
$$
\Hom_\T(X,\Sigma^i(Y))\longrightarrow \Hom_{\T^\prime}(FX,F\Sigma^i(Y))
$$
are isomorphic for all $X,Y\in S$ and $i\in \Z$. In this case, $F$ is dense if and only if $\Ima F$ contains a compact generating set of $\T^\prime.$
\end{lemma}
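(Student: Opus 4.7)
The plan is to use the standard ``test on generators, extend by devissage'' argument for compactly generated triangulated categories. The ``only if'' direction for full faithfulness is trivial; the real content is the ``if'' direction.

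First I would fix $X \in S$ and consider the full subcategory
\[
\mathcal{U}_X := \{\, Y \in \T \mid \Hom_\T(X,\Sigma^i Y) \xrightarrow{\sim} \Hom_{\T'}(FX,\Sigma^i FY) \text{ for all } i \in \Z\,\}.
\]
This is closed under shifts, cones, and coproducts: the coproduct closure uses that $X$ is compact in $\T$ and $FX$ is compact in $\T'$ (the latter is not assumed, but we only need that $\Hom_{\T'}(FX,-)$ commutes with the specific coproducts coming from $F$ applied to coproducts in $\T$, which follows because $F$ preserves coproducts and then the comparison map becomes the product of the given isomorphisms). Hence $\mathcal{U}_X$ is a localizing subcategory; by hypothesis it contains $S$, and since $S$ compactly generates $\T$, we conclude $\mathcal{U}_X = \T$.

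Next I would fix $Y \in \T$ and run the dual argument in the first variable: the full subcategory
\[
\mathcal{V}_Y := \{\, X \in \T \mid \Hom_\T(X,\Sigma^i Y) \xrightarrow{\sim} \Hom_{\T'}(FX,\Sigma^i FY) \text{ for all } i \in \Z\,\}
\]
is again localizing (closure under coproducts now uses that $\Hom$ out of a coproduct is a product, in both categories, combined with $F$ preserving coproducts). By the previous paragraph $S \subseteq \mathcal{V}_Y$, so $\mathcal{V}_Y = \T$. This gives the full faithfulness.

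For the density part, assuming $F$ is fully faithful I would analyze the essential image $\Ima F$. It is a triangulated subcategory (using exactness of $F$ together with full faithfulness to lift morphisms $Fg$ and represent cones), and it is closed under coproducts because $F$ preserves them. Hence $\Ima F$ is a localizing subcategory of $\T'$. If it contains a compact generating set of $\T'$, then the smallest localizing subcategory it contains is all of $\T'$, so $\Ima F = \T'$ and $F$ is dense. The converse is immediate: if $F$ is dense it is an equivalence, so $F(S)$ is a compact generating set of $\T'$ that lies in $\Ima F$. The only step that needs a little care is verifying closure under coproducts in the definitions of $\mathcal{U}_X$ and $\mathcal{V}_Y$, which I expect to be the main (mild) obstacle; this is where the compactness of the test objects $X \in S$ is essential.
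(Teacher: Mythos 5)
Your overall template (dévissage on a generating set, then the localizing-subcategory argument for density) is the standard one, and the first-variable step ($\mathcal{V}_Y$) and the density part are fine. The genuine gap is exactly the point you flag and then argue away: closure of $\mathcal{U}_X$ under coproducts. That $F$ preserves coproducts only gives the identification $F(\coprod_j Y_j)\cong\coprod_j FY_j$; it says nothing about whether the canonical map $\coprod_j\Hom_{\T^{\prime}}(FX,FY_j)\to\Hom_{\T^{\prime}}(FX,\coprod_j FY_j)$ is bijective, and that is precisely what your argument needs. In the commutative square comparing the two canonical maps, the top map is bijective because $X$ is compact and the left map is bijective by the inductive hypothesis, so the comparison map for $\coprod_j Y_j$ is bijective if and only if the bottom canonical map in $\T^{\prime}$ is; since that canonical map is always injective (the coproduct injections are split by the zero-extended projections), all you can deduce is injectivity. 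Bijectivity is exactly the assertion that $FX$ is compact in $\T^{\prime}$, which is not among the hypotheses, so the claim that "the comparison map becomes the product of the given isomorphisms" is unjustified.

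Moreover this cannot be repaired within the stated hypotheses, because the "if" direction is false without assuming that $F$ sends $S$ to compact objects. Take $\T=\D(\Z_p\text{-}\Mod)$, $\T^{\prime}=\D(\Z\text{-}\Mod)$, $S=\{\Z_p\}$, $E=\Z/p^{\infty}$ viewed as a $\Z$-$\Z_p$-bimodule, and $F=E\otimes^{\mathsf{L}}_{\Z_p}-$. Then $F$ is exact and preserves coproducts, and $\Hom_{\T}(\Z_p,\Z_p[i])\to\Hom_{\T^{\prime}}(E,E[i])$ is an isomorphism for every $i$: for $i=0$ it is the identity of $\Z_p\cong\End_{\Z}(\Z/p^{\infty})$, and for $i\neq 0$ both sides vanish since $E$ is divisible, hence injective over $\Z$. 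Yet $F$ is not full: the morphism $E\to\coprod_{j\geq 1}E$ whose $j$-th component is multiplication by $p^j$ is well defined (each element of $E$ is killed by almost all $p^j$) and has infinitely many nonzero components, whereas every morphism coming from $\Hom_{\T}(\Z_p,\coprod_j\Z_p)=\coprod_j\Z_p$ has finite support. So the lemma needs the additional hypothesis that $F(S)\subseteq\T^{\prime\com}$; note that the paper offers no proof (it only cites the literature), and in every application of the lemma in the paper the images of the chosen generators are indeed compact. Once that hypothesis is added, your argument is correct as structured: $\mathcal{U}_X$ is localizing because both $X$ and $FX$ are compact, the $\mathcal{V}_Y$ step needs no compactness at all, and the density argument stands.
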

\begin{chunk}
\textbf{DG categories and dg functors.} An additive category $\A$ is called a \emph{dg category} provided that for each $X,Y\in \A$, the morphism space $\Hom_{\A}(X,Y)$ is a complex and the compositions
$$
\Hom_{\A}(Y,Z)\otimes_{\Z}\Hom_{\A}(X,Y)\longrightarrow \Hom_{\A}(X,Z)
$$
are chain maps. 
An additive functor $F\colon \A\rightarrow \B$ is called a \emph{dg functor} provided that $F$ commutes with the differential.
\end{chunk}
Let $\A$ be an additive category. Denote by $\C_{\dg}(\A)$ the dg category of  complexes over $\A$ whose mophism spaces are Hom complex defined by 
$$
\Hom_{\A}(X,Y)^i=\prod_{p\in \Z}\Hom_{\A}(X^p,Y^{p+i})
$$
with differential $\del(f)=\del_Y\circ f+(-1)^{\mid f \mid}f\circ \del_X$. 

The \emph{homotopy category} $\h^0(\A)$ of $\A$ is defined to be the category with same objects of $\A$ and the morphism spaces are the zeroth cohomology of the corresponding Hom complexes in $\A$. Observe that 
$$
\h^0(\C_{\dg}(\A))=\K(\A).
$$
\begin{chunk}
\textbf{Derived categories of dg categories.}
We briefly discuss the derived category of a dg category. See \cite{Keller94} for more details.

Let $\A$ be a small dg category. A  \emph{dg module} over $\A$ is a dg functor
$$
M\colon \A^{\mathrm{op}}\longrightarrow \C_{\dg}(\Z\text{-}\Mod).
$$
Then the category of dg $\A$-modules, denoted $\Mod_{\dg}(\A)$,  is still a dg category; see \cite[Section 1.2]{Keller94}. Its homotopy category $\h^0(\Mod_{\dg}(\A))$ is a triangulated category; see \cite[Lemma 2.2]{Keller94}. A dg $\A$-module is called \emph{acyclic} if $M(X)$ is acyclic for each object $X\in \A$. The \emph{derived category} of $\A$ is defined to be the Verdier quotient of $\h^0(\Mod_{\dg}(\A))$ by its full subcategory of acyclic dg $\A$-modules.
\end{chunk}
We have the Yoneda embedding
$$
\mathrm{Y}\colon \h^0(\A)\longrightarrow \D(\A), ~X\mapsto \Hom_{\A}(-,X).
$$
It is well-known that $\D(\A)$ is compactly generated by the image of $\mathrm{Y}$; see \cite[4.2]{Keller94}.
\begin{chunk}\label{pretriangulated category}
\textbf{Pretriangulated category.}
Keep the notation as above. The dg category $\A$ is called \emph{pretriangulated} if $\Ima Y$ is a triangulated category. In this case, $\h^0(\A)$ inherits a natural triangulated structure and there is (up to direct summands) a triangle equivalence 
$$
\h^0(\A)\stackrel{\sim}\longrightarrow \D(\A)^{\com}.
$$
\end{chunk}
\begin{chunk}\label{def of dg enhancement}
\textbf{DG enhancement.}
Let $\T$  be a triangulated category and $\A$ be a dg category. $\A$ is said to be a \emph{dg enhancement} of $\T$ provided that $\A$ is pretriangulated and $\T$ is triangle equivalent to $\h^0(\A)$ endowed with the natural triangulated structure (see \ref{pretriangulated category}). In this case, any triangulated subcategory $\S$ of $\T$ has a dg enhancement. Indeed, denote by $\A^\prime$ the full dg subcategory of $\A$ consisting of objects in the essential image of $\S$. Then $\A^\prime$ is a dg enhancement of $\S$; see \cite[Section 2.2]{KY}.
\end{chunk}
Let $\A$ be an additive category. Then $\C_{\dg}(\A)$ is pretriangulated and is a dg enhancement of $\K(\A)$.
\begin{example}\label{example of dg enhancement}
 By above, $\C_{\dg}(R\text{-}\Mod)$ (resp. $\C_{\dg}(R\text{-}\Inj)$) is a dg enhancement of $\K(R\text{-}\Mod)$ (resp. $\K(R\text{-}\Inj)$). Denote by $\per_{\dg}(R)$ the full dg subcategory of $\C_{\dg}(R\text{-}\Mod )$ consisting of all perfect complexes. Then $\per_{\dg}(R)$ is a dg enhancement of $\per (R)$.

Next we give an example that is used in Section \ref{Section 3}. We write $\C^{+,f}_{\dg}(R\text{-}\Inj)$ to be the full subcategory of $\C_{\dg}(R\text{-}\Inj)$ formed by bounded below complexes whose total cohomology are finitely generated $R$-modules. Induced by taking injective resolution,  there exists a triangle equivalence
 $$
 \D^b(R\text{-}\mo)\stackrel{\sim}\longrightarrow \h^0(\C^{+,f}_{\dg}(R\text{-}\Inj)).
 $$
\end{example}

\section{Triangulated categories of periodic complexes}\label{Section 2}
Throughout the article, $n\geq 1$ is an integer.
In this section, we investigate periodic complexes. Remarkably, there exists an adjoint pair between the classical triangulated category and the corresponding triangulated category of periodic complexes. It is proved that many  properties of the latter can be determined by the former. The main result in this section is Theorem \ref{recollement}.

Let $\A$ be an additive category, denote by $\C_n(\A)$ the category of $n$-periodic complexes over $\A$ whose morphism spaces are $n$-periodic morphisms; see the introduction.
For each $l\in \Z$, there is a canonical suspension functor $[l]$  on $\C_n(\A)$ which maps $X$ to $X[l]$ ($X[l]^i:=X^{i+l}$, $\del^i_{X[l]}:=(-1)^l\del^{i+l}_X)$ and acts trivially on morphisms.
\begin{chunk}
\textbf{Homotopy category of $n$-periodic complexes.} Let $\A$ be an additive category and $X,Y\in \C_n(\A)$. Two morphisms $f,g\colon X\rightarrow Y$ are called \emph{homotopic} if there exists a sequence $\{\sigma^i\colon X^i\rightarrow Y^{i-1}\}_{i\in \Z}$ of morphisms over $\A$ such that $f^i-g^i=\sigma^{i+1}\circ \del_X^i+\del_Y^{i-1}\circ \sigma^i$ and $\sigma^i=\sigma^{i+n}$ for all $i\in \Z$.

The \emph{homotopy category} of $n$-periodic complexes over $\A$, denoted $\K_n(\A)$, is defined by identifying homotopy in $\C_n(\A)$. It is a triangulated category with suspension functor [1]; see \cite[Section 7]{PX1997}.

Let $f\colon X\rightarrow Y$ be a morphism in $\C_n(\A)$. The \emph{mapping cone} $C(f)$ of $f$  is
$$
C(f)^i:=X^{i+1}\coprod Y^i,~\del_{C(f)}^i:=
{\begin{pmatrix}
-\del_X^{i+1} & 0\\
f^{i+1}& \del_Y^i
\end{pmatrix}.
}
$$
In $\K_n(\A)$, $f$ can be embedded in a canonical exact triangle
$$
\xymatrix{
X\ar[r]^-f & Y\ar[r]^-{\begin{pmatrix}\begin{smallmatrix}0\\1
\end{smallmatrix}\end{pmatrix}}& C(f)\ar[r]^-{(1~0)}& X[1]
}
$$
\end{chunk}

As Peng and Xiao \cite[7.1]{PX1997} mentioned, $\C_n(\A)$ is a subcategory of $\C(\A)$ (usually not full) and $\K_n(\A)$ is usually not a subcategory of $\K(\A)$.
\begin{chunk}
\textbf{Derived category of $n$-periodic complexes.} Let $\A$ be an abelian category. A $n$-periodic complex $X$ is called \emph{acylcic} if it is acyclic as complex, i.e. $\h^i(X):=\Ker(\del^i_X)/\Ima(\del^{i-1}_X)=0$ for all $i\in \Z$. The \emph{derived category} of $n$-periodic complexes over $\A$, denoted $\D_n(\A)$, is the Verdier quotient category of $\K_n(\A)$ by its full subcategory of acyclic $n$-periodic complexes.
\end{chunk}
Following the definition of the compression for the case $n=1$ in \cite[1.3]{ABI}, we define the compression for arbitrary $n\geq 1$; see also \cite{Stai}.
\begin{chunk}\label{counit}
\textbf{Compression.}
Let $\A$ be an additive category with coproducts.  For a complex $X\in \C(\A)$
$$
\xymatrix{
   \cdots\ar[r]& X^{i-1}\ar[r]^-{\del_X^{i-1}}& X^i\ar[r]^-{\del_X^i}& X^{i+1}\ar[r]& \cdots,
   }
$$
The \emph{compression} $\Delta(X)$ of $X$ is defined by
$$
 \cdots\longrightarrow \coprod_{j\equiv i-1  (\mo ~n)}X^j\longrightarrow\coprod_{j\equiv i (\mo ~n)} X^j\longrightarrow \coprod_{j\equiv i+1 (\mo~ n)}X^j\longrightarrow\cdots
$$
with the natural differential induced by the differential of $X$, where the $i$-th component of $\Delta(X)$ is $\coprod_{j\equiv i(\mo~ n)}X^j.$

This gives an additive functor $\Delta:\C(\A)\rightarrow \C_n(\A)$.
\end{chunk}
Clearly, there is a natural exact functor $\nabla\colon\C_n(\A)\rightarrow \C(\A )$ which maps a periodic complex to itself. 
We observe that $(\Delta,\nabla)$ is an adjoint pair.  For each $X$ in $\C(\A)$, it is not hard to see there is an isomorphism $\nabla\Delta(X)\cong \coprod_{i\in \Z}X[ni]$. Moreover, the unit $\eta_X\colon X\rightarrow \nabla\Delta(X)$ corresponding to the adjoint pair is the composition 
$$
X\stackrel{\can }\longrightarrow\coprod_{i\in \Z}X[ni]\cong \nabla\Delta(X).
$$
\begin{chunk}\label{adjoint}
Keep the notation as \ref{counit}. One can check directly that  $\Delta$ and $\nabla$ preserves homotopy, suspensions and mapping cones. Hence they induce an adjoint pair of exact functors between the homotopy categories
\[\begin{tikzcd}
\K(\A) \arrow[r,shift left=0.8ex,"\Delta"]&\K_n(\A)\arrow[l,shift left=0.8ex,"\nabla"].
\end{tikzcd}\]
If $\A$ is also abelian, we observe $\nabla$ induces an exact functor $\nabla\colon \D_n(\A)\rightarrow \D(\A)$. If further $\A$ is an AB4 category (i.e. an abelian category with coproducts and the coproduct is an exact functor), then $\Delta$ preserves acyclic objects. Thus $\Delta$ naturally induces an exact functor $\Delta\colon \D(\A)\rightarrow \D_n(\A) $. Moreover, $(\Delta,\nabla)$ is an adjoint between the derived categories; see \cite[Lemma 1]{Orlov2009}.
\end{chunk}

\begin{chunk}\label{preserve coproduct}
If $\A$ is an additive category with coproducts, then it can be checked directly that both  $\K(\A)$ and $\K_n(\A)$ have coproducts. If $\A$ is an AB4 category,  then both $\D(\A)$ and $\D_n(\A)$ have coproducts; see 
\cite[Proposition 3.5.1]{krause2010}.

In addition, in these cases, the degree-wise coproduct of objects in $\K(A)$ (resp. $\K_n(\A)$, $\D(A)$, $\D_n(\A)$) is the categorical coproduct.
\end{chunk}
Similar results of \ref{preserve coproduct} hold when we replace the coproduct by the product and replace an AB4 category by an AB4* category (i.e. an abelian category with products and the product is an exact functor).
\begin{lemma}\label{description of compact object}
(1) Let $\A$ be an additive category with coproducts and $X$ be an object in $\K(\A)$ .  Then $X$ is compact in $\K(\A)$  if and only if $\Delta(X)$ is compact in $\K_n(\A)$.

(2) Let $\A$ be an AB4 category and $X$ be an object in $\D(\A)$ .  Then $X$ is compact in $\D(\A)$  if and only if $\Delta(X)$ is compact in $\D_n(\A)$.
\end{lemma}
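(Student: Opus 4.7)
The plan is to exploit the adjunction $(\Delta,\nabla)$ recorded in \ref{adjoint} together with the observation that both $\Delta$ and $\nabla$ preserve coproducts. For $\Delta$ this is automatic, as it is a left adjoint; for $\nabla$ it follows from \ref{preserve coproduct}, since coproducts in $\K(\A)$ and $\K_n(\A)$ are computed degree-wise and $\nabla$ does not alter the underlying graded data. Given these, the forward direction of (1) reduces to a chain of natural isomorphisms: if $X$ is compact in $\K(\A)$ and $\{Y_\lambda\}$ is a family in $\K_n(\A)$, then
\[
\Hom_{\K_n(\A)}(\Delta X,\coprod_\lambda Y_\lambda)\cong\Hom_{\K(\A)}(X,\coprod_\lambda \nabla Y_\lambda)\cong\coprod_\lambda\Hom_{\K(\A)}(X,\nabla Y_\lambda)\cong\coprod_\lambda\Hom_{\K_n(\A)}(\Delta X,Y_\lambda),
\]
so $\Delta X$ is compact.

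For the converse, the tempting strategy is to argue from the unit $\eta_X\colon X\to\nabla\Delta X$ of \ref{counit}, which exhibits $X$ as the $k=0$ summand of $\nabla\Delta X\cong\coprod_{k\in\Z}X[nk]$; however, this direct-summand tactic \emph{fails} at the level of objects, because $\nabla\Delta X$ is typically not compact in $\K(\A)$ (already for $X=R$ in $\K(R\text{-}\Mod)$, $\coprod_k R[nk]$ fails to be compact). The workaround is to apply the summand trick at the level of Hom-sets. Given a family $\{Z_\lambda\}$ in $\K(\A)$, set $Y_\lambda:=\Delta Z_\lambda$, so that $W_\lambda:=\nabla Y_\lambda\cong Z_\lambda\oplus Z'_\lambda$ with $Z'_\lambda:=\coprod_{k\neq 0}Z_\lambda[nk]$. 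Compactness of $\Delta X$, fed into the same isomorphism chain as above, yields that the canonical map $\coprod_\lambda\Hom_{\K(\A)}(X,W_\lambda)\to\Hom_{\K(\A)}(X,\coprod_\lambda W_\lambda)$ is an isomorphism. Naturality in $W_\lambda$ and the compatibility of coproducts with finite biproducts cause this isomorphism to split as the direct sum of the canonical maps for the subfamilies $\{Z_\lambda\}$ and $\{Z'_\lambda\}$; each diagonal block of an isomorphism between biproducts is itself an isomorphism, so in particular $\coprod_\lambda\Hom_{\K(\A)}(X,Z_\lambda)\to\Hom_{\K(\A)}(X,\coprod_\lambda Z_\lambda)$ is an isomorphism, proving $X$ compact.

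Part (2) is established by the identical argument in the derived setting, using that $(\Delta,\nabla)$ is an adjoint pair between $\D(\A)$ and $\D_n(\A)$ under the AB4 hypothesis (see \ref{adjoint}), that coproducts in these derived categories remain degree-wise (see \ref{preserve coproduct}), and that the unit still has the explicit form $Z\hookrightarrow\coprod_{k\in\Z}Z[nk]$ with a canonical retraction. The main obstacle is therefore the failure of $\nabla$ to preserve compactness; it is circumvented by transporting compactness only at the Hom-level via the biproduct decomposition of $\nabla\Delta Z_\lambda$.
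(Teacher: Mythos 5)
Your argument is correct and is essentially the paper's own proof: the forward direction uses that $\Delta$ is left adjoint to the coproduct-preserving $\nabla$, and the converse transports compactness of $\Delta(X)$ through the adjunction and exploits that the unit $Z\to\nabla\Delta(Z)\cong\coprod_{k\in\Z}Z[nk]$ is a split injection. Your block-diagonal extraction of the comparison map for $\{Z_\lambda\}$ is just a repackaging of the paper's commutative diagram built from $(\eta_{Z_\lambda})_*$ and the split unit, so no new idea or gap is involved.
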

\begin{proof}
We prove (1). The proof of (2) is similar. First, assume $X$ is compact in $\K(\A)$. Since $(\Delta,\nabla)$ is an adjoint pair and $\nabla$ preserves coproducts (c.f. \ref{preserve coproduct}), $\Delta$ preserves compact objects; see \cite[Theorem 5.1]{Neeman96}. Thus $\Delta(X)$ is compact in $\K_n(\A)$.

For the converse, assume $\Delta(X)$ is compact in $\K_n(\A).$  For a class of objects $Y_i (i\in I)$ in $\K(\A)$, consider the commutative diagram
$$
\xymatrix{
\displaystyle\coprod_{i\in I}\Hom_{\K(\A)}(X,Y_i)\ar[d]_-{\coprod_{i\in I}(\eta_{Y_i})_\ast}\ar[r]^-{\can}& \Hom_{\K(\A)}(X,\displaystyle\coprod_{i\in I}Y_i)\ar[d]^-{(\eta_{\coprod_{i\in I}Y_i})_\ast}\\
\displaystyle\coprod_{i\in I}\Hom_{\K(\A)}(X,\nabla\Delta(Y_i))& \Hom_{\K(\A)}(X,\nabla\Delta(\displaystyle\coprod_{i\in I}Y_i))\\
\displaystyle\coprod_{i\in I}\Hom_{\K_n(\A)}(\Delta(X),
\Delta(Y_i))\ar[u]^-{\cong}\ar[r]^-{\cong} & \Hom_{\K_n(A)}(\Delta(X),\Delta(\displaystyle\coprod_{i\in I}Y_i))\ar[u]_-{\cong}
}
$$
where the vertical isomorphisms are induced by adjoint pair $(\Delta, \nabla)$ and the horizontal one is based on the assumption. Since the unit $\eta_M\colon M\rightarrow \nabla\Delta(M)$ is split injection for each $M\in \K(\A)$ (see \ref{counit}), we conclude that $\can$ is isomorphism.
\end{proof}
\begin{example}\label{finite projective flag}
Let $n=1$. Following  \cite{ABI}, a differential $R$-module $(P,\delta_P)$ admits a \emph{finite projective flag} if $P=P_0\coprod P_1\coprod \cdots\coprod P_l$ and $\delta_P$ is of the form
$$
\begin{pmatrix}
0&   \del_{1,0}&   \del_{2,0}&   \cdots&   \del_{l-1,0}&     \del_{l,0}\\
0&           0&   \del_{2,1}&   \cdots&   \del_{l-1,1}&     \del_{l,1}\\
0&           0&            0&   \cdots&   \del_{l-1,2}&     \del_{l,2}\\
\vdots& \vdots&       \vdots&   \ddots&         \vdots&         \vdots\\
0&           0&            0&   \cdots&              0&   \del_{l,l-1}\\
0&           0&            0&   \cdots&              0&              0\\
\end{pmatrix}
$$
where each $P_i$ is finitely generated projective $R$-module. Set $F^i=(P_0\coprod\cdots\coprod P_i,\delta_P)$ ($0\leq i\leq l$). These are differential submodules of $(P,\delta_P)$. It follows that $(P,\delta_P)$ has a filtration
$$
F^0\subseteq F^1\subseteq \cdots \subseteq F^l=(P,\delta_P)
$$
such that $F^i/F^{i-1}\cong (P_i,0)$ for each $i$. Since $\Delta(R)=(R,0)\in \D_1(R\text{-}\Mod)^{\com}$ (see Lemma \ref{description of compact object}), the differential modules that admit finite projective flags are compact objects in $\D_1(R\text{-}\Mod)$.
\end{example}
If $\A$ is an abelian category, then an object $X$ in $\D_n(\A)$ is zero  if and only if $\nabla(X)$ is zero in $\D(\A)$. Similar result holds in the homotopy category; see next lemma.
For the case of $n=1$ and $\A$ is the category of modules over a ring, it was obtained by Avramov, Buchweitz and Iyengar \cite[Proposition 1.8]{ABI}.
\begin{lemma}\label{zero in homotopy category}
Let $\mathcal A$ be an additive category and $X$ be an object in  $\K_n(\mathcal{A})$. Then $X$ is zero in $\K_n(\mathcal{A})$ if and only if $\nabla(X)$ is zero in $\K(\mathcal A)$.
\end{lemma}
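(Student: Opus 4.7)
The ``only if'' direction is immediate: any periodic null-homotopy of $\id_X$ in $\C_n(\A)$ is, in particular, a (non-periodic) null-homotopy of $\id_{\nabla X}$ in $\C(\A)$.

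For the converse, my plan is to exhibit $X$ as a direct summand of $\Delta\nabla X$ in $\C_n(\A)$. Once this is done, since $\Delta$ is additive and preserves homotopy equivalences (\ref{adjoint}), the assumption $\nabla X \cong 0$ in $\K(\A)$ forces $\Delta\nabla X \cong 0$ in $\K_n(\A)$, and then $X$, being a retract of a zero object, is zero in $\K_n(\A)$. The functor $\Delta$ is only defined when $\A$ has coproducts, so first I reduce to that case: the Yoneda embedding $y\colon \A \hookrightarrow \A' := \mathrm{Add}(\A^{\mathrm{op}}, \mathsf{Ab})$ is a fully faithful additive functor into a category with all small coproducts, and it induces fully faithful functors $\K_n(\A) \hookrightarrow \K_n(\A')$ and $\K(\A) \hookrightarrow \K(\A')$; hence $X \cong 0$ in $\K_n(\A)$ iff $yX \cong 0$ in $\K_n(\A')$, and likewise for $\nabla X$. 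So one may assume $\A$ has coproducts.

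The central computation, now assuming coproducts in $\A$, unfolds as follows. Using $n$-periodicity $X^{i+kn} = X^i$, the compression gives
\[
(\Delta\nabla X)^i \;=\; \coprod_{j \equiv i \,(\mathrm{mod}\,n)} (\nabla X)^j \;=\; \coprod_{k\in\Z} X^i,
\]
and since $d_X^{i+kn} = d_X^i$, the differentials act block-diagonally in the index $k$. Thus $\Delta\nabla X \cong \coprod_{k \in \Z} X$ in $\C_n(\A)$. The counit $\epsilon\colon \Delta\nabla X \to X$ of the adjunction $(\Delta,\nabla)$ corresponds to $\id_{\nabla X}$ under the adjunction, so it is the fold map $\coprod_k X \to X$. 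The inclusion $\iota\colon X \hookrightarrow \Delta\nabla X$ into the $k=0$ summand is a morphism in $\C_n(\A)$ (block-diagonality of the differentials makes it a chain map) and satisfies $\epsilon\circ \iota = \id_X$, exhibiting $X$ as a direct summand of $\Delta\nabla X$.

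The only real obstacle is the bookkeeping in identifying $\Delta\nabla X$ with $\coprod_{k \in \Z} X$ and checking that the differentials really are block-diagonal (so that the obvious set-theoretic section is a morphism in $\C_n(\A)$). Both the Yoneda reduction and the subsequent retract argument are then routine.
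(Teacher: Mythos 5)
Your reduction to the case where $\A$ has coproducts and your treatment of the easy direction are fine, but the central computation fails: the differential of $\Delta\nabla X$ is \emph{not} block-diagonal in the index $k$, and $\Delta\nabla X$ is in general not isomorphic to $\coprod_{k\in\Z}X$ in $\C_n(\A)$, nor is $X$ a direct summand of $\Delta\nabla X$. The point is the wrap-around forced by periodicity. Index the summand of $(\Delta\nabla X)^i=\coprod_{j\equiv i(\mo\,n)}X^j$ with $j=i+kn$ by $k$. The differential from degree $i$ to degree $i+1$ does preserve $k$ for $0\le i\le n-2$, but the degree-$(i+n)$ component \emph{is} the degree-$i$ component, and under this identification the summand indexed by $k$ at degree $i+n$ is the summand indexed by $k+1$ at degree $i$; hence $\del^{n-1}$ shifts $k$ by one. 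For $n=1$ this says $\Delta\nabla X=(\coprod_{j\in\Z}X^0,\ \del_X\circ S)$ with $S$ the shift, a twisted differential rather than the diagonal one. Consequently your section $\iota$ into the ``$k=0$ summand'' cannot be both $n$-periodic and a chain map: periodicity forces $\iota^{i+n}=\iota^i$, while the chain-map condition at the wrap-around forces the image to move from the summand $j=0$ to the summand $j=n$. A concrete counterexample to the retraction claim: take $n=1$, $R=k[t]/(t^2)$, $X=(R,t)$. Any chain map $X\to\Delta\nabla X$ sends $1$ to some $v$ with $(S-\id)v\in tM$ (where $M=\coprod_\Z R$), and since the shift on $\coprod_\Z k$ has no nonzero fixed vectors in a direct sum, $v\in tM$; hence any composite $X\to\Delta\nabla X\to X$ is multiplication by an element of $(t)$ and cannot equal (or even be homotopic to) $\id_X$. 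So the unconditional splitting your argument hinges on does not exist, and the proof has a genuine gap. (Note also that your hypothesis is only that $\nabla X\cong 0$ in $\K(\A)$, so this failure cannot be dismissed as pathological: the retraction was supposed to hold for every $X$.)

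The statement is nevertheless true, and there are two ways to proceed. The paper argues directly: from a contraction $s$ of $\nabla X$, i.e.\ $\id_{X^i}=s^{i+1}\del_X^i+\del_X^{i-1}s^i$, it manufactures a \emph{periodic} contraction by keeping $\sigma^i=s^j$ for $i\equiv j(\mo\,n)$ with $1\le j\le n-1$ and correcting the single wrap-around spot by setting $\sigma^i=s^n\circ\del_X^{-1}\circ s^0$ for $i\equiv 0(\mo\,n)$; one then only has to verify the homotopy identity in degrees $0$ and $n-1$. Alternatively, your strategy can be repaired by replacing ``direct summand'' with a telescope-type triangle: the counit $\epsilon\colon\Delta\nabla X\to X$ (the fold map) fits into a degreewise split short exact sequence of $n$-periodic complexes $0\to\Delta\nabla X\xrightarrow{\,S-\id\,}\Delta\nabla X\xrightarrow{\ \epsilon\ }X\to 0$, where $S$ is the shift in the index $k$ (which, unlike your $\iota$, \emph{is} a periodic chain map because it commutes with the twisted differential). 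This yields an exact triangle in $\K_n(\A)$, so $X$ lies in the thick subcategory generated by $\Delta\nabla X$; since $\Delta$ preserves homotopies, $\nabla X\cong 0$ gives $\Delta\nabla X\cong 0$ and hence $X\cong 0$.
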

\begin{proof}
The forward direction is trivial. For the converse, assume $\nabla(X)$ is zero in $\K(\A)$. Then there exists $s^i\in \Hom_{\A}(X^i,X^{i-1})$ for all $i\in \Z$ such that 
\begin{equation}\label{equation}
    \id_{X^i}=s^{i+1}\circ \del_X^i+\del_X^{i-1}\circ s^i.
\end{equation} 
We define $\sigma^i\colon X^i\rightarrow X^{i-1}$ as follows
$$
\sigma^i=\begin{cases}
s^n\circ \del_X^{-1}\circ s^0, &\text{if } i\equiv 0(\mo ~n)\\
s^j& \text{if } i\equiv j(\mo~n) \text{ and } 1\leq j\leq n-1
\end{cases}.
$$
Our aim is to show that this gives the homotopy map from $\id_X$ to $0$ in $\K_n(\A)$. Due to the choice of $\sigma^i$, it remains to check that $\id_{X^0}=\sigma^1\circ \del_X^0+\del_X^{-1}\circ \sigma^0$ and $\id_{X^{n-1}}=\sigma^0\circ \del_X^{n-1}+\del_X^{n-2}\circ \sigma^{n-1}$.
Indeed, these are direct consequences of  (\ref{equation}). Thus $X$ is zero in $\K_n(\A)$.
\end{proof}
\begin{lemma}\label{compactly generated}
(1) Let $\mathcal A$ be an additive category with coproducts. If $\K(\mathcal A)$ is compactly generated, then so is $\K_n(\A)$ and it's compactly generated by the image of $\K(\A)^{\com}$ under the compression functor.

(2) Let $\mathcal A$ be an AB4 category. If $\D(\mathcal A)$ is compactly generated, then so is $\D_n(\A)$ and it's compactly generated by the image of $\D(\A)^{\com}$ under the compression functor.
\end{lemma}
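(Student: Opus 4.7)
The plan is to exploit the adjunction $(\Delta,\nabla)$ between $\K(\A)$ and $\K_n(\A)$ (and its derived analog in (2)) together with three ingredients already available: that $\Delta$ preserves compact objects (Lemma \ref{description of compact object}), that coproducts in $\K_n(\A)$ and $\D_n(\A)$ exist (\ref{preserve coproduct}), and that $\nabla$ is conservative (Lemma \ref{zero in homotopy category} in the homotopy case; trivial in the derived case, since an object of $\D_n(\A)$ is zero iff its underlying complex is acyclic). The key observation is that $\nabla$ also preserves coproducts (again by \ref{preserve coproduct}, since coproducts are computed degreewise) and commutes with the suspension $[1]$.

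For (1), let $S \subseteq \K(\A)^{\com}$ be a compact generating set for $\K(\A)$. First, $\Delta(S) \subseteq \K_n(\A)^{\com}$ by Lemma \ref{description of compact object}(1). To see that $\Delta(S)$ generates, I would take $Y \in \K_n(\A)$ with $\Hom_{\K_n(\A)}(\Delta(X), Y[i]) = 0$ for all $X \in S$ and $i \in \Z$. By adjunction and the fact that $\nabla$ commutes with $[1]$, this gives
$$\Hom_{\K(\A)}(X, \nabla(Y)[i]) \;\cong\; \Hom_{\K(\A)}(X, \nabla(Y[i])) \;\cong\; \Hom_{\K_n(\A)}(\Delta(X), Y[i]) \;=\; 0$$
for all $X \in S$ and $i$. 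Since $S$ generates $\K(\A)$, it follows that $\nabla(Y) \cong 0$ in $\K(\A)$, and then Lemma \ref{zero in homotopy category} forces $Y \cong 0$ in $\K_n(\A)$. Hence $\Delta(S)$ is a compact generating set; since $\Delta(S) \subseteq \Delta(\K(\A)^{\com}) \subseteq \K_n(\A)^{\com}$, the larger set $\Delta(\K(\A)^{\com})$ is also a compact generating set, as claimed.

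For (2), the argument is verbatim the same after replacing $\K$ by $\D$ and using Lemma \ref{description of compact object}(2); the AB4 hypothesis ensures both that $\Delta$ and $\nabla$ descend to the derived categories with $(\Delta,\nabla)$ still an adjoint pair (see \ref{adjoint}) and that $\D_n(\A)$ has coproducts. The conservativity of $\nabla\colon \D_n(\A) \to \D(\A)$ needed in place of Lemma \ref{zero in homotopy category} is immediate, since an $n$-periodic complex is zero in $\D_n(\A)$ precisely when it is acyclic as a complex. I do not foresee a real obstacle: every piece of input has been set up earlier in the section, and the proof is essentially a bookkeeping exercise with the adjunction.
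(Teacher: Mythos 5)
Your proposal is correct and follows essentially the same route as the paper: apply $\Delta$-compactness from Lemma \ref{description of compact object}, test against the adjunction $(\Delta,\nabla)$, and conclude via conservativity of $\nabla$ (Lemma \ref{zero in homotopy category} in the homotopy case, the trivial observation in the derived case). The only cosmetic differences are that you work with an arbitrary compact generating set $S$ and track the suspensions explicitly, whereas the paper tests directly against all of $\K(\A)^{\com}$, which is closed under shifts.
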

\begin{proof}
We prove (1). The proof of (2) is similar.
Suppose $\K(\mathcal A)$ is compactly generated. Lemma \ref{description of compact object} yields   $\Delta(\K(\mathcal A)^{\com})\subseteq \K_n( \mathcal A)^{\com}$.
Let $X\in \K_n( \mathcal A)$ and $$\Hom_{\K_n( \mathcal A)}(\Delta(\K(\mathcal A)^{\com}),X)=0.$$ In order to show $\K_n( \mathcal A)$ is compactly generated by $\Delta(\K(\A)^{\com})$, we need to prove $X=0$ in $\K_n(\mathcal A)$. By adjoint we have
$$
\Hom_{\K(\mathcal A)}(\K(\mathcal A)^{\com},\nabla(X))=0.
$$
Then the assumption implies  $\nabla(X)=0$. It follows immediately from Lemma \ref{zero in homotopy category} that $X=0$ in $\K_n(\mathcal A)$. As required.
\end{proof}

The following result is due to Neeman; see \cite[Theorem 4.1]{Neeman96} and \cite[Theorem 8.6.1]{Neeman01}.
\begin{chunk}\label{Bousfield}
Let $\S$ be a compactly generated triangulated category and $F\colon \S\rightarrow \T$ be an exact functor between triangulated categories. Then 

(1) $F$ has a right adjoint if and only if $F$ preserves coproducts.

(2) $F$ has a left adjoint if and only if $F$ preserves  products.
\end{chunk}

As we assume $R$ is a left noetherian ring,  the direct sum of injective $R$-modules is still injective; see \cite[Theorem 3.1.17]{EJ}. Hence $R\text{-}\Inj$ is an additive category with coproducts. 
\begin{chunk}\label{Krause's recollement}
Krause proved that $\K(R\text{-}\Inj)$ and the full subcategory of $\K(R\text{-}\Inj)$ formed by acyclic complexes (denoted $\K^{\ac}(R\text{-}\Inj)$) are compactly generated triangulated categories; see \cite[Proposition 2.3, Corollary 5.4]{Krause}. Moreover, Krause \cite[Corollary 4.3]{Krause} observed that the canonical sequence
$$
\K^{\ac}(R\text{-}\Inj) \stackrel{\mathrm{inc}}\longrightarrow\K(R\text{-}\Inj )\stackrel{Q}\longrightarrow \D(R\text{-}\Mod)
$$
induces a recollement; the definition of recollement is recalled in \ref{def of recollement}.
\end{chunk}
Let $\K_n^{\ac}(R\text{-}\Inj)$ denote the full subcategory of $\K_n(R\text{-}\Inj)$ formed by acyclic complexes. This is a localizing subcategory of $\K(R\text{-}\Inj)$.
Next, we give the periodic version of Krause's result in \ref{Krause's recollement}.
\begin{theorem}\label{recollement}
Let $R$ be a left noetherian ring. Then

(1) $\K^{\ac}_n(R\text{-}\Inj), \K_n(R\text{-}\Inj )$ and $\D_n(R\text{-}\Mod)$ are compactly generated triangulated categories.

(2)
The sequence 
$$
\K^{\ac}_n(R\text{-}\Inj) \stackrel{\mathrm{inc}}\longrightarrow\K_n(R\text{-}\Inj )\stackrel{Q}\longrightarrow \D_n(R\text{-}\Mod)
$$
induces a recollement

\adjustbox{scale=0.8,center}{
$$
\begin{tikzcd}
\K^{\ac}_n(R\text{-}\Inj )\arrow[rr,"\mathrm{inc}"]&&\K_n(R\text{-}\Inj ) \arrow[ll,bend right]\arrow[ll,bend left]\arrow[rr,"Q"]&& \D_n(R\text{-}\Mod ).\arrow[ll, bend right]\arrow[ll,bend left]
\end{tikzcd}
$$
}
\end{theorem}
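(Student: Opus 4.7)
For part (1), I combine Krause's compact generation of $\K(R\text{-}\Inj)$ and $\K^{\ac}(R\text{-}\Inj)$ (recalled in \ref{Krause's recollement}), together with the standard compact generation of $\D(R\text{-}\Mod)$ by $R$, with Lemma \ref{compactly generated}. Direct application of Lemma \ref{compactly generated}(1) (resp.\ (2)) immediately yields compact generation of $\K_n(R\text{-}\Inj)$ (resp.\ of $\D_n(R\text{-}\Mod)$). For $\K^{\ac}_n(R\text{-}\Inj)$, I first observe that the adjoint pair $(\Delta,\nabla)$ of \ref{adjoint} restricts to an adjoint pair between $\K^{\ac}(R\text{-}\Inj)$ and $\K^{\ac}_n(R\text{-}\Inj)$: the functor $\nabla$ preserves acyclicity tautologically, while $\Delta$ does so because $R\text{-}\Mod$ is AB4. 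The proof of Lemma \ref{compactly generated}, which rests only on the adjunction together with the zero-detection Lemma \ref{zero in homotopy category}, then transfers verbatim.

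For part (2), the strategy is to first produce the recollement abstractly from existence of adjoints and then identify the quotient with $\D_n(R\text{-}\Mod)$. Since $R$ is left noetherian, products of injectives are injective, so $\K_n(R\text{-}\Inj)$ has products; since $R\text{-}\Mod$ is AB4 and AB4${}^*$, acyclicity is preserved under both coproducts and products, and hence the inclusion $\K^{\ac}_n(R\text{-}\Inj)\hookrightarrow \K_n(R\text{-}\Inj)$ preserves both. Compact generation from part (1) combined with Neeman's theorem \ref{Bousfield} then produces both a right and a left adjoint to this inclusion. By \ref{inclusion has right iff quo has} applied in both variances, the Verdier quotient functor $Q'\colon \K_n(R\text{-}\Inj)\to \K_n(R\text{-}\Inj)/\K^{\ac}_n(R\text{-}\Inj)$ also acquires both adjoints, so the sequence is simultaneously a localization and colocalization sequence; by the remark following \ref{inclusion has right iff quo has}, this is precisely a recollement.

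It remains to identify $\K_n(R\text{-}\Inj)/\K^{\ac}_n(R\text{-}\Inj)$ with $\D_n(R\text{-}\Mod)$. The composition $Q\colon \K_n(R\text{-}\Inj)\hookrightarrow \K_n(R\text{-}\Mod)\to \D_n(R\text{-}\Mod)$ annihilates acyclic objects, so factors through $Q'$ as a functor $\bar Q$ that I will verify is a triangle equivalence using Lemma \ref{test equivalence}. By part (1), applying $\Delta$ to a compact generating set $\mathcal S$ of $\K(R\text{-}\Inj)$ produces a compact generating set of $\K_n(R\text{-}\Inj)$, and the corresponding compact generating set of $\D(R\text{-}\Mod)$ is the image of $\mathcal S$ under Krause's equivalence $\K(R\text{-}\Inj)/\K^{\ac}(R\text{-}\Inj)\simeq \D(R\text{-}\Mod)$. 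For $I,J\in \mathcal S$, the adjunction $(\Delta,\nabla)$ rewrites each Hom group $\Hom(\Delta I,\Sigma^{i}\Delta J)$ on both sides as $\Hom(I,\Sigma^{i}\nabla\Delta J)$, and compactness of $I$ together with $\nabla\Delta J\cong \coprod_{j\in\Z} J[nj]$ (see \ref{counit}) turns this into a coproduct of ordinary Hom groups; Krause's non-periodic equivalence matches the $\K$-side summands with the $\D$-side summands, yielding fully faithfulness. Density is automatic, since the image already contains the compact generating set. The \emph{main obstacle} lies here: one must verify that the identification supplied by the adjunctions is genuinely induced by $\bar Q$, i.e.\ that the summand-wise isomorphism coming from the formula $\nabla\Delta\cong \coprod[ni]$ is compatible with the canonical factorization through the Verdier quotient. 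Once this is checked, everything else reduces to the previously established machinery.
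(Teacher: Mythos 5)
Your part (1) is exactly the paper's argument (Lemma \ref{compactly generated} plus Krause's results, with the same proof transferred to the acyclics), and the first half of your part (2) is a harmless variant: the paper applies \ref{Bousfield} directly to $Q$ (which preserves products and coproducts by \ref{preserve coproduct}), while you apply it to the inclusion of $\K^{\ac}_n(R\text{-}\Inj)$; by \ref{inclusion has right iff quo has} these give the same adjoints, and either way one gets a localization and colocalization sequence ending in the abstract Verdier quotient. The real content of (2), however, is the identification of $\K_n(R\text{-}\Inj)/\K^{\ac}_n(R\text{-}\Inj)$ with $\D_n(R\text{-}\Mod)$, and there your argument has a genuine gap which is not the ``compatibility'' issue you flag at the end.

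Two things go wrong in the Lemma \ref{test equivalence} step. First, the hypotheses are not available: the lemma needs a compact generating set of the source, but you have not shown that $Q'(\Delta I)$ is compact in the Verdier quotient for $I\in\K(R\text{-}\Inj)^{\com}$, and in general it is not: once the identification with $\D_n(R\text{-}\Mod)$ is made, Lemma \ref{description of compact object} shows $\Delta I$ is compact there only when $I$ is perfect, which fails for some $I\in\K(R\text{-}\Inj)^{\com}\simeq\D^b(R\text{-}\mo)$ whenever $R$ has infinite global dimension (equivalently, $Q$ does not preserve compactness in general). Second, and decisively, the Hom computation is false. On the target side the adjunction gives $\Hom_{\D_n(R\text{-}\Mod)}(\Delta I,\Delta J[i])\cong\Hom_{\D(R\text{-}\Mod)}(I,\coprod_{j}J[nj+i])$, but $I$ is compact in $\K(R\text{-}\Inj)$, \emph{not} in $\D(R\text{-}\Mod)$, so this is not $\coprod_j\Hom_{\D(R\text{-}\Mod)}(I,J[nj+i])$. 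Concretely, for $R=k[x]/(x^2)$ and $I=J$ an injective resolution of $k$, computing with the projective resolution of $k$ (note $\coprod_j k[nj]$ has only one summand in each degree) gives $\Hom_{\D(R\text{-}\Mod)}(I,\coprod_j J[nj])\cong\prod_{j\geq 0}\Ext^{nj}_R(k,k)$, an infinite product, whereas $\coprod_j\Hom_{\D}(I,J[nj])$ is the corresponding direct sum. On the source side, the adjunction $(\Delta,\nabla)$ computes $\Hom_{\K_n(R\text{-}\Inj)}(\Delta I,\Delta J[i])\cong\coprod_j\Hom_{\K(R\text{-}\Inj)}(I,J[nj+i])$, which is the Hom group in the homotopy category, not in the Verdier quotient; since $\Delta I$ is no longer bounded below it is not right orthogonal to $\K^{\ac}_n(R\text{-}\Inj)$, and in the example above the two groups genuinely differ (coproduct versus product). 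So the summand-by-summand matching you propose does not exist, and no compatibility check can repair it. The paper avoids computing any Hom groups: it proves the right adjoint $Q_\rho$ of $Q$ is fully faithful by producing the left adjoint $J_\lambda$ of the inclusion $J\colon\K_n(R\text{-}\Inj)\rightarrow\K_n(R\text{-}\Mod)$, observing that $\Ker J_\lambda$ consists of acyclic complexes so that $Q\circ J_\lambda$ is the localization functor $\K_n(R\text{-}\Mod)\rightarrow\D_n(R\text{-}\Mod)$, and then invoking \ref{inclusion has right iff quo has} to conclude $J\circ Q_\rho$, hence $Q_\rho$, is fully faithful. Your identification step should be replaced by an argument of this kind.
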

\begin{proof}
(1) Combine with \ref{Krause's recollement}, it follows immediately from Lemma \ref{compactly generated} that $\K_n(R\text{-}\Inj)$ and $\D_n(R\text{-}\Mod)$ are compactly generated. Also, with the same proof of Lemma \ref{compactly generated}, $\K_n^{\ac}(R\text{-}\Inj)$ is compactly generated.

(2) Next, we borrow Krause's idea in the proof of \cite[Corollary 4.3]{Krause}. Since $Q$ preserves coproducts and products (c.f. \ref{preserve coproduct}),  $\mathrm{Q}$ has both a left adjoint and a right adjoint by (1) and \ref{Bousfield}.
Combine with \ref{inclusion has right iff quo has}, it remains to show $Q$ induces a triangle equivalence $\K_n(R\text{-}\Inj )/\K^{\ac}_n(R\text{-}\Inj )\cong \D_n(R\text{-}\Mod )$. Again \ref{inclusion has right iff quo has} yields this is equivalent to show the right adjoint of $Q$  is fully faithful.

Denote by $Q_\rho$ the right adjoint of $Q$.  It is clear that the inclusion functor $J\colon \K_n(R\text{-}\Inj )\rightarrow \K_n(R\text{-}\Mod)$ preserves products. Then Lemma \ref{compactly generated} and \ref{Bousfield} imply that $J$ has a left adjoint $J_\lambda$. Hence there are adjoint pairs
$$
\xymatrix{
  \K_n(R\text{-}\Mod ) \ar@<0.5ex>[r]^{J_\lambda}
     & \K_n(R\text{-}\Inj )\ar@<0.5ex>[l]^{J}\ar@<.5ex>[r]^Q&\D_n(R\text{-}\Mod ).\ar@<.5ex>[l]^{Q_\rho} }
$$
Since $J$ is a fully faithful right adjoint of $J_\lambda$, ~$\Hom_{\K_n(R\text{-}\Mod )}(\mathrm{Ker} J_\lambda,\K_n(R\text{-}\Inj ))=0$. This implies $\mathrm{Ker}J_\lambda\subseteq\K^{\ac}_n(R\text{-}\Mod)$. Thus for each $M\in \K_n(R\text{-}\Mod )$, the unit $\eta_M\colon M\rightarrow J_\lambda (M)$ is a quasi-isomorphism. It follows that $Q(M)\cong (Q\circ J_\lambda) (M)$. That is, $Q\circ J_\lambda$ is isomorphic to the localization functor $\K_n(R\text{-}\Mod)\rightarrow \D_n(R\text{-}\Mod ).$  By \ref{inclusion has right iff quo has}, $J\circ Q_\rho$ is fully faithful. As $J$ is also fully faithful, we infer that $Q_\rho$ is too. This completes the proof.
\end{proof}

\begin{chunk}
Let $\A$ be an abelian category. A $n$-periodic complex $X\in \K_n(\A)$ is called \emph{homotopy injective} (resp. \emph{homotopy projective}) if $$\Hom_{\K_n(\A)}(Y,X)=0~~ (\text{resp. } \Hom_{\K_n(\A)}(X,Y)=0)
$$ 
for each acyclic complex $Y\in \K_n(\A)$. Denote by $\K_n^{\mathsf{i}}(\A)$ (resp. $\K_n^{\mathsf{p}}(\A)$) the full subcategory of $\K_n(\A)$ consisting of all homotopy injective  (resp. homotopy projective) complexes. They naturally inherit the structure of triangulated categories.
\end{chunk}
Let $Q_{\rho}$ denote the right adjoint of $Q\colon \K_n(R\text{-}\Inj )\rightarrow \D_n(R\text{-}\Mod )$. Using the adjointness, it is easy to check  $Q_{\rho}(X)$ is homotopy injective for each $n$-periodic complex $X$ and the unit $X\rightarrow Q_{\rho}(X)$ is a quasi-isomorphism. Thus we get: 
\begin{corollary}\label{homotopy injective resolution}
$Q_{\rho}$ induces a triangle equivalence
$$
\D_n(R\text{-}\Mod )\stackrel{\sim}\longrightarrow\K_n^{\mathsf{i}}(R\text{-}\Mod).
$$
\end{corollary}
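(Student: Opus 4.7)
The plan is to identify the essential image of $Q_{\rho}$, composed with the inclusion $J\colon\K_n(R\text{-}\Inj)\hookrightarrow\K_n(R\text{-}\Mod)$, with $\K_n^{\mathsf{i}}(R\text{-}\Mod)$. Since $Q_{\rho}$ is fully faithful by Theorem \ref{recollement} and $J$ is fully faithful by construction, the composite $JQ_{\rho}$ is a fully faithful exact functor $\D_n(R\text{-}\Mod)\to \K_n(R\text{-}\Mod)$. The work is therefore to show (a) that every object in the image of $JQ_\rho$ is homotopy injective, and (b) that every homotopy injective $n$-periodic complex lies in this image.

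For (a), fix $X\in\D_n(R\text{-}\Mod)$ and an acyclic object $Z\in\K_n(R\text{-}\Mod)$. Using the adjunctions $J_{\lambda}\dashv J$ and $Q\dashv Q_{\rho}$ set up in the proof of Theorem \ref{recollement}, one computes
\[
\Hom_{\K_n(R\text{-}\Mod)}(Z,JQ_{\rho}X)\;\cong\;\Hom_{\K_n(R\text{-}\Inj)}(J_{\lambda}Z,Q_{\rho}X)\;\cong\;\Hom_{\D_n(R\text{-}\Mod)}(QJ_{\lambda}Z,X).
\]
Since $QJ_{\lambda}$ is (isomorphic to) the localization functor $L\colon\K_n(R\text{-}\Mod)\to\D_n(R\text{-}\Mod)$ and $Z$ is acyclic, $QJ_{\lambda}Z=0$, so the Hom group vanishes; this gives $JQ_{\rho}X\in\K_n^{\mathsf{i}}(R\text{-}\Mod)$.

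For (b), note that $L=QJ_{\lambda}$ is left adjoint to $JQ_{\rho}$. Since $Q_{\rho}$ is fully faithful, the counit $QQ_{\rho}\to\id$ is an isomorphism, and since $J$ is fully faithful the unit $\id\to J_{\lambda}J$ vanishes on the nose up to isomorphism, giving $LJQ_{\rho}=QJ_{\lambda}JQ_{\rho}\cong QQ_{\rho}\cong\id$. Fix $Y\in\K_n^{\mathsf{i}}(R\text{-}\Mod)$ and let $\eta_Y\colon Y\to JQ_{\rho}LY$ be the unit of $L\dashv JQ_{\rho}$. By the triangle identity, $L\eta_Y$ is an isomorphism, so the cone of $\eta_Y$ (computed in $\K_n(R\text{-}\Mod)$) is acyclic, i.e.\ $\eta_Y$ is a quasi-isomorphism. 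By (a) the target $JQ_{\rho}LY$ is also homotopy injective, so I may finish with the standard observation that a quasi-isomorphism between homotopy injective $n$-periodic complexes is an isomorphism in $\K_n(R\text{-}\Mod)$: applying $\Hom_{\K_n(R\text{-}\Mod)}(-,Y)$ and $\Hom_{\K_n(R\text{-}\Mod)}(-,JQ_{\rho}LY)$ to the exact triangle produced by the cone of $\eta_Y$ and using the homotopy-injective property kills the obstruction groups, producing a two-sided inverse of $\eta_Y$. Hence $Y\cong JQ_{\rho}LY$ in $\K_n(R\text{-}\Mod)$, establishing density.

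The main obstacle is keeping the various adjoint pairs from Theorem \ref{recollement} straight while verifying that $LJQ_{\rho}\cong\id$, since the statement blends the adjunctions $J_{\lambda}\dashv J$ and $Q\dashv Q_{\rho}$; everything else is a direct application of adjunction and the closing lemma that quasi-isomorphic homotopy injective complexes are already homotopy equivalent, which is formal from the triangulated structure of $\K_n(R\text{-}\Mod)$ recalled in Section \ref{Section 2}.
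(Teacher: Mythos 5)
Your argument is correct and is essentially the paper's own proof written out in detail: the paper likewise asserts, via the adjunctions set up in the proof of Theorem \ref{recollement} and the identification of $Q\circ J_\lambda$ with the localization functor, that $JQ_{\rho}X$ is homotopy injective and that the unit $Y\to JQ_{\rho}LY$ is a quasi-isomorphism, and then concludes by the standard fact that a quasi-isomorphism between homotopy injective periodic complexes is an isomorphism in $\K_n(R\text{-}\Mod)$. One terminological slip worth fixing: since $J$ is fully faithful it is the \emph{counit} $J_\lambda J\to\id$ of the adjunction $J_\lambda\dashv J$ that is an isomorphism (it does not ``vanish''), and this is exactly what your identification $QJ_\lambda JQ_{\rho}\cong QQ_{\rho}\cong\id$ uses.
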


\begin{remark}\label{homotopy projective}
(1) Tang and Huang \cite[Theorem 5.11]{TH} proved an analog of the above result for higher differential objects. Two results coincide when $n=1$.

(2) Stai \cite[Section 3]{Stai} obtained the dual version of the above result. That is,  the localization functor $Q\colon \K_n(R\text{-}\Mod )\rightarrow \D_n(R\text{-}\Mod)$ has a left adjoint and the left adjoint induces a triangle equivalence 
$$
\D_n(R\text{-}\Mod)\stackrel{\sim}\longrightarrow \K_n^{\mathsf{p}}(R\text{-}\Mod).
$$
\end{remark}

\section{The triangulated hull of the orbit categories}\label{Section 3}
In this section, we prove Theorem \ref{t1} from the introduction.
\begin{chunk}
Let $\A$ be an additive category and $T\colon \A\rightarrow \A$ be an autoequivalence.
As mentioned in the introduction, the objects in the same $T$-orbit are isomorphic in the orbit category $\A/T$. We remind the reader that in general $F$ is not isomorphic to the identity functor in the orbit category $\A/T$;  see \cite{Keller09} and \cite[Proposition 5.6]{Stai}. However, there is a natural isomorphism $\pi\cong \pi\circ T$, where $\pi\colon \A\rightarrow \A/T$ is the projection functor. Moreover, this gives rise to the universal property of the orbit category: 

If the functor $F\colon \A\rightarrow\B $ satisfies $F\circ T\cong F$, then there exists a natural functor $\overline{F}\colon \A/T\rightarrow \B$ such that $\overline{F}\circ \pi= F$. 
\end{chunk}
\begin{chunk}
Let $\A$ be an additive category. Recall the degree shift functor (n) on $\C(\A)$: for a complex $X$, $X(n)^i:=X^{i+n}$, $\del^i_{X(n)}:=\del_X^{i+n}$; $(n)$ acts trivially on morphisms. There is a natural isomorphism  $X[n]\xrightarrow{\cong} X(n)$ which maps $x\in X^i$ to $(-1)^{ni}x$. 

If further $\A$ is an additive category with coproducts (resp. AB4 category), then $\Delta\circ [n]\cong \Delta\circ (n)=\Delta$. By the universal property of the orbit category, $\Delta$ induces 
$$
\overline{\Delta}\colon \K(\mathcal{A})^{\com}/[n]\longrightarrow \K_n(\mathcal A)^{\com}~~(\text{resp. }\overline{\Delta}\colon \D(\mathcal{A})^{\com}/[n]\longrightarrow \D_n(\mathcal A)^{\com}).
$$
\end{chunk}
We first strength Lemma \ref{compactly generated} to the following result; compare \cite[Lemma 3.13]{Stai}.
\begin{proposition}\label{embedding}
(1) Let $\mathcal A$ be an additive category with coproducts. If $\K(\A)$ is compactly generated, then there is a fully faithful embedding
$$
\overline{\Delta}\colon \K(\mathcal{A})^{\com}/[n]\longrightarrow \K_n(\mathcal A)^{\com} 
$$
and $\K_n(\mathcal{A})$  is compactly generated by its image.

(2) Let $\mathcal A$ be an AB4 category. If $\D(\A)$ is compactly generated, then there is a fully faithful embedding
$$
\overline{\Delta}\colon \D(\mathcal{A})^{\com}/[n]\longrightarrow \D_n(\mathcal A)^{\com} 
$$
and $\D_n(\mathcal{A})$  is compactly generated by its image.
\end{proposition}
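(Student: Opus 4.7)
The plan is to establish (1) in detail and note that (2) follows by the same argument applied to the derived-category setting (the only inputs needed are the adjoint pair $(\Delta,\nabla)$, the formula $\nabla\Delta(Y)\cong \coprod_{i\in\Z}Y[ni]$, Lemma \ref{description of compact object}, and Lemma \ref{compactly generated}, all of which hold for both the homotopy and the derived setting in the relevant hypotheses). First, the functor $\overline{\Delta}$ exists by the universal property of the orbit category, since $\Delta\circ[n]\cong \Delta$; and its essential image lies in $\K_n(\A)^{\com}$ by Lemma \ref{description of compact object}. Since the projection $\pi\colon \K(\A)^{\com}\to \K(\A)^{\com}/[n]$ is the identity on objects, the essential image of $\overline{\Delta}$ coincides with that of $\Delta|_{\K(\A)^{\com}}$, so compact generation of $\K_n(\A)$ by the image of $\overline{\Delta}$ is an immediate consequence of Lemma \ref{compactly generated}.

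The real content is full faithfulness. For $X,Y\in \K(\A)^{\com}$, I would assemble the following chain of natural isomorphisms:
\begin{align*}
\Hom_{\K(\A)/[n]}(X,Y)
&= \coprod_{i\in\Z}\Hom_{\K(\A)}(X,Y[ni]) \\
&\xrightarrow{\;\cong\;} \Hom_{\K(\A)}\!\Bigl(X,\coprod_{i\in\Z}Y[ni]\Bigr)
  \qquad\text{($X$ compact)}\\
&= \Hom_{\K(\A)}\bigl(X,\nabla\Delta(Y)\bigr)
  \qquad\text{(formula for $\nabla\Delta$)}\\
&\xrightarrow{\;\cong\;} \Hom_{\K_n(\A)}\bigl(\Delta(X),\Delta(Y)\bigr)
  \qquad\text{(adjunction $(\Delta,\nabla)$).}
\end{align*}
The first isomorphism is the canonical map arising from compactness of $X$, which by \ref{Neeman's description of compact objects} is bijective; the last is the adjunction bijection. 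Thus, once we verify that the composite coincides with the map on Hom-sets induced by $\overline{\Delta}$, we are done.

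For this compatibility check I would unwind the adjunction unit, which by \ref{counit} is the canonical split inclusion $\eta_X\colon X\hookrightarrow\coprod_{i\in\Z}X[ni]\cong \nabla\Delta(X)$. Under the displayed chain, a single summand element $f\colon X\to Y[ni]$ is first sent to the composite $X\xrightarrow{f} Y[ni]\hookrightarrow \coprod_{j}Y[nj]= \nabla\Delta(Y)$, and the adjunction then identifies this with the morphism $\Delta(X)\to\Delta(Y)$ whose composition with $\eta_{(-)}$ recovers that composite; a direct computation shows this morphism is precisely $\Delta(f)$ (after identifying $\Delta(Y[ni])$ with $\Delta(Y)$ via the natural isomorphism $\Delta\circ[n]\cong\Delta$). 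This is exactly how $\overline{\Delta}$ acts on the summand, so the whole composite equals the map induced by $\overline{\Delta}$, proving full faithfulness.

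The main obstacle is this last bookkeeping step, namely verifying that the natural iso $\Delta(Y[ni])\cong \Delta(Y)$ used to define $\overline{\Delta}$ on morphisms is compatible with the summand decomposition $\nabla\Delta(Y)\cong \coprod_i Y[ni]$ appearing on the other side of the adjunction. Everything else is a formal consequence of compactness plus the adjoint pair $(\Delta,\nabla)$. For part (2), the identical argument works once we observe that Lemma \ref{description of compact object}(2), the adjunction $(\Delta,\nabla)$ at the derived level (\ref{adjoint}), and Lemma \ref{compactly generated}(2) provide the derived-category analogues of every ingredient used above.
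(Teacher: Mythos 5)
Your proposal is correct and takes essentially the same route as the paper: the compact-generation claim is delegated to Lemma \ref{compactly generated}, and full faithfulness is obtained from the adjunction $(\Delta,\nabla)$ combined with compactness of $X$ and the formula $\nabla\Delta(Y)\cong\coprod_{i\in\Z}Y[ni]$ from \ref{counit}. The only difference is that you spell out the compatibility of the composite isomorphism with the map induced by $\overline{\Delta}$, a bookkeeping step the paper's proof leaves implicit with ``it follows immediately.''
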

\begin{proof}
We prove (1). The proof of (2) is similar. By Lemma \ref{compactly generated}, it remains to show $\overline{\Delta}$ is fully faithful. For $X,Y\in \K(\mathcal A)^{\com}$, we have
\begin{align*}
    \Hom_{\K_n(\A)}(\Delta(X),\Delta(Y))\cong &\Hom_{\K(\mathcal A)}(X,\nabla \Delta(Y))\\
    \cong &\coprod_{i\in \mathbb{Z}}\Hom_{\K(\mathcal A)}(X,Y[ni]),
\end{align*}
where the second isomorphism is because $X$ is compact and $\nabla\Delta (Y)\cong \coprod_{i\in \mathbb Z}Y[ni]$ (see \ref{counit}). It follows immediately from the isomorphism above that the induced functor $\overline{\Delta}\colon  \K(\mathcal{A})^{\com}/[n]\rightarrow \K_n(\mathcal A)^{\com}
$
is fully faithful.
\end{proof}
\begin{chunk}
A homogeneous morphism $f$ in a dg category is called \emph{closed} if $f$ is of degree $0$ and $\del(f)=0$. We call a natural transformation $\eta$ between dg functors \emph{closed} if $\eta_X$ is closed for all $X\in \A$. 
\end{chunk}
The following is the universal property of the orbit category of the dg category.
\begin{lemma}\label{induce a functor}
Let $T\colon \A\rightarrow \A$ be a dg autoequivalence of a dg category $\A$ and $F\colon \A\rightarrow \B$ be a dg functor between dg categories such that there exists a closed natural isomorphism $\eta\colon F\circ T\rightarrow F$. Then $F$ induces a dg functor $\overline{F}\colon \A/T\rightarrow \B$ such that $\overline{F}\circ \pi=F$, where $\pi\colon \A\rightarrow \A/T$ is the projection functor.
\end{lemma}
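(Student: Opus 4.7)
The plan is to construct $\overline{F}$ explicitly using iterates of $\eta$ and then verify the remaining properties. On objects, set $\overline{F}(X):=F(X)$. To handle morphisms, first iterate $\eta$ to obtain, for each $i\in\Z$, a closed natural isomorphism $\eta^{(i)}\colon F\circ T^i\Rightarrow F$ by declaring $\eta^{(0)}:=\id_F$ and, for $i\geq 1$,
$$
\eta^{(i)}_Y := \eta_Y\circ \eta^{(i-1)}_{TY}\colon F(T^iY)\longrightarrow F(Y);
$$
for $i<0$ use inverses of the components $\eta_{T^kY}$, which exist because $\eta$ is a natural isomorphism. A straightforward induction then shows that each $\eta^{(i)}_Y$ is a closed degree-zero isomorphism and that the cocycle identity
$$
\eta^{(i+j)}_Y=\eta^{(j)}_Y\circ \eta^{(i)}_{T^jY}
$$
holds for all $i,j\in\Z$.

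Next, define $\overline{F}$ componentwise on the morphism complex
$$
\Hom_{\A/T}(X,Y)=\coprod_{i\in\Z}\Hom_\A(X,T^iY)
$$
by sending a summand element $f\in\Hom_\A(X,T^iY)$ to $\overline{F}(f):=\eta^{(i)}_Y\circ F(f)$, and extending additively. Three of the four required properties are then immediate: (i) $\overline{F}(\id_X)=\eta^{(0)}_X\circ F(\id_X)=\id_{F(X)}$; (ii) $\overline{F}\circ\pi=F$, because $\pi$ lands in the $i=0$ summand where $\eta^{(0)}=\id_F$; and (iii) compatibility with differentials, $\overline{F}(\del f)=\del\overline{F}(f)$, which follows from the Leibniz rule together with $\del(\eta^{(i)}_Y)=0$ (a consequence of $\eta$ being closed and of the composition of closed morphisms being closed) and the dg-functoriality of $F$.

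The one step that really uses the hypothesis is verifying that $\overline{F}$ preserves composition. In $\A/T$ the composite of $f\in\Hom_\A(X,T^iY)$ with $g\in\Hom_\A(Y,T^jZ)$ is by definition $T^i(g)\circ f\in\Hom_\A(X,T^{i+j}Z)$, so the required identity is
$$
\eta^{(i+j)}_Z\circ F(T^ig)\circ F(f)\;=\;\bigl(\eta^{(j)}_Z\circ F(g)\bigr)\circ\bigl(\eta^{(i)}_Y\circ F(f)\bigr).
$$
Applying the cocycle identity to the left-hand side rewrites it as $\eta^{(j)}_Z\circ\eta^{(i)}_{T^jZ}\circ F(T^ig)\circ F(f)$, and naturality of $\eta^{(i)}$ applied to the morphism $g\colon Y\to T^jZ$ gives $\eta^{(i)}_{T^jZ}\circ F(T^ig)=F(g)\circ \eta^{(i)}_Y$; the two sides then match. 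I expect this to be the main computational step, but no deeper obstacle arises: the statement is really a dg-version of the universal property of the orbit category, and the construction above provides the required lift directly.
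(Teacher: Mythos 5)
Your construction is exactly the paper's: define $\overline{F}$ as $F$ on objects and send $\alpha\in\Hom_\A(X,T^iY)$ to $\eta^{(i)}_Y\circ F(\alpha)$ using the iterated closed isomorphisms $F\circ T^i\cong F$, then check compatibility with differentials and $\overline{F}\circ\pi=F$. Your proof is correct, and in fact slightly more thorough than the paper's, since you spell out the cocycle identity and the verification that composition is preserved, which the paper leaves implicit.
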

\begin{proof}
By assumption, $\eta$ induces closed isomorphisms $F\circ T^i\stackrel{\sim}\rightarrow  F$ (denoted $\eta^i$). We define $\overline{F}\colon \A/T\rightarrow \B$ as follows: $\overline{F}(M)=F(M)$ for all $M\in \A$; for each homogeneous morphism $\alpha\colon M\rightarrow T^j(N)$ in $\Hom_{\A/T}(M,N)$, $\overline{F}(\alpha)$ is defined by the following composition
$$
\xymatrix{F(M)\ar[r]^-{F(\alpha)}& F(T^j(N))\ar[r]^-{\eta^j_N}_-{\cong}& F(N).
}
$$
By assumption $F$ is dg functor and $\eta$ is closed, we have a commutative diagram
$$
\xymatrix{
\Hom_\A(X,T^j(Y))\ar[r]^-F\ar[d]^-{\del}& \Hom_\B(F(X),F(T^j(Y)))\ar[r]^-{(\eta^j_Y)_\ast}\ar[d]^-{\del}& \Hom_\B(F(X), F(Y))\ar[d]^-{\del}\\
\Hom_\A(X,T^j(Y))\ar[r]^-{F}& \Hom_\B(F(X),F(T^j(Y)))\ar[r]^-{(\eta^j_Y)_{\ast}}&\Hom_\B(F(X),F(Y)).
}
$$
This means $\overline{F}$ is a dg functor. Clearly $\overline{F}\circ \pi=F$.
\end{proof}

\begin{example}\label{example}
Let $\mathcal{C}$ be an additive category. Set $\A=\C_{\dg}(\mathcal{C})$ and $\B=\C_{\dg}(\Z)$. The suspension functor $[n]\colon \C_{\dg}(\mathcal{C})\rightarrow \C_{\dg}(\mathcal{C})$ is a dg autoequivalence. If $X$ is a $n$-periodic complex in $\A$, 
 then
$$
 \Hom_\A(Y[n],X)\cong \Hom_\A(Y(n),X)\cong\Hom_\A(Y,X),
$$
where the first isomorphism is induced by  $Y[n]\cong Y(n)$ and the second one maps $\alpha\colon Y(n)^i\rightarrow X^j$ to $\alpha\colon Y^{n+i}\rightarrow X^{n+j}$.
Set $F=\Hom_\A(-,X)$ and $T=[n]$. We conclude that there is a closed natural isomorphism $F\circ T\cong F$. This is an example that satisfies the assumption of Lemma \ref{induce a functor}.
\end{example}
\begin{chunk}\label{compact object in K(Inj R)}
Let $R$ be a left noetherian ring.
Krause \cite[Proposition 2.3]{Krause} proved that $\K(R\text{-}\Inj)$ is a compactly generated. Moreover, he observed that the localization functor $\K(R\text{-}\Mod )\rightarrow \D(R\text{-}\Mod )$ induces a triangle equivalence
$$
    \K(R\text{-}\Inj )^{\com}\stackrel{\sim}\longrightarrow \D^b(R\text{-}\mo).
$$
The inverse is induced by taking injective resolution. In particular, $\K(R\text{-}\Inj )^{\com}$ is the full subcategory of $\K(R\text{-}\Inj )$ consisting of complexes with finitely generated total cohomology. 
\end{chunk}
 Recall that $\per_{\dg}(R)$ is the dg category of perfect complexes over $R$ and  $\C_{\dg}^{+,f}(R\text{-}\Inj)$ is the dg category of bounded below complexes of injective $R$-modules with finitely generated total cohomology. They are dg enhancements of $\per(R)$ and $\D^b(R\text{-}\mo )$ respectively; see \ref{compact object in K(Inj R)} and Example \ref{example of dg enhancement}. 
 
Next, we realize examples of triangulated categories in Theorem \ref{recollement} as derived categories of dg categories; compare \cite[Theorem 2.2]{CLW} and \cite[Appendix A]{Krause}.

\begin{theorem}\label{main result}
Let $R$ be a left noetherian ring. There are triangle equivalences
$$
\K_n(R\text{-}\Inj )\stackrel{\sim}\longrightarrow \D(\C_{\dg}^{+,f}(R\text{-}\Inj )/[n])
\text{ and }~\D_n(R\text{-}\Mod )\stackrel{\sim}\longrightarrow\D(\per_{\dg} (R)/[n]).$$
\end{theorem}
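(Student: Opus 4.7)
The plan is to realize both triangle equivalences as instances of the restricted Yoneda construction: I build a dg functor from each orbit dg category into a dg enhancement of the target triangulated category, and then apply Lemma \ref{test equivalence} to prove that the induced restricted Yoneda functor is an equivalence.

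First I fix the ambient dg enhancements. The dg category $\C_{\dg,n}(R\text{-}\Inj)$ of $n$-periodic complexes of injectives (with $n$-periodic Hom-complexes) is a dg enhancement of $\K_n(R\text{-}\Inj)$, while a dg enhancement $\mathcal{E}$ of $\D_n(R\text{-}\Mod)$ is obtained from the $n$-periodic homotopy injective complexes by Corollary \ref{homotopy injective resolution} and \ref{def of dg enhancement}. The compression is dg at the level of complexes, and the identification $X[n]\cong X(n)$ together with $\Delta\circ(n)=\Delta$ provides a closed natural isomorphism $\Delta\circ [n]\cong \Delta$ exactly as in Example \ref{example}. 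Restricting $\Delta$ to the full dg subcategories $\C_{\dg}^{+,f}(R\text{-}\Inj)$ and $\per_{\dg}(R)$ respectively, Lemma \ref{induce a functor} produces dg functors
$$
\bar\Delta_1\colon\C_{\dg}^{+,f}(R\text{-}\Inj)/[n]\longrightarrow \C_{\dg,n}(R\text{-}\Inj),\qquad \bar\Delta_2\colon\per_{\dg}(R)/[n]\longrightarrow \mathcal{E}.
$$

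These induce the candidate triangle functors
$$
\Psi_1\colon\K_n(R\text{-}\Inj)\longrightarrow \D(\C_{\dg}^{+,f}(R\text{-}\Inj)/[n]),\qquad \Psi_2\colon\D_n(R\text{-}\Mod)\longrightarrow \D(\per_{\dg}(R)/[n])
$$
sending $X$ to the dg module $\Hom(\bar\Delta_i(-),X)$. Using the adjoint pair $(\Delta,\nabla)$ from \ref{adjoint} together with the identification $\nabla\Delta(Y)\cong \coprod_{i}Y[ni]$ from \ref{counit}, a direct computation at the level of Hom-complexes identifies $\Psi_1(\Delta Y)$ with the Yoneda representable $\mathrm{Y}(Y)$ (and similarly for $\Psi_2$). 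By Theorem \ref{recollement}, Lemma \ref{compactly generated}, and Krause's identification $\K(R\text{-}\Inj)^{\com}\simeq \D^b(R\text{-}\mo)$ from \ref{compact object in K(Inj R)}, the category $\K_n(R\text{-}\Inj)$ is compactly generated by $\Delta(\K(R\text{-}\Inj)^{\com})$; since the target is compactly generated by the Yoneda representables, $\Psi_1$ carries a compact generating set to a compact generating set. The analogous statement for $\Psi_2$ uses $\per(R)=\thick_{\D(R\text{-}\Mod)}(R)$.

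To conclude I invoke Lemma \ref{test equivalence}. For $Y,Y'\in \K(R\text{-}\Inj)^{\com}$ and $i\in\Z$, compactness of $Y$ in $\K(R\text{-}\Inj)$ combined with the adjunction $(\Delta,\nabla)$ gives
$$
\Hom_{\K_n(R\text{-}\Inj)}(\Delta Y,\Delta Y'[i])\;\cong\; \Hom_{\K(R\text{-}\Inj)}\!\Bigl(Y,\coprod_{j\in\Z}Y'[nj+i]\Bigr)\;\cong\; \coprod_{j\in\Z}\Hom_{\K(R\text{-}\Inj)}(Y,Y'[nj+i]),
$$
which matches $\Hom_{\D(\C_{\dg}^{+,f}(R\text{-}\Inj)/[n])}(\mathrm{Y}(Y),\mathrm{Y}(Y')[i])$ by the definition of Hom in an orbit category; the identical computation with $\per_{\dg}(R)$ handles $\Psi_2$. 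Lemma \ref{test equivalence} then yields full faithfulness, and density follows because the image contains a compact generating set. The main obstacle will be the bookkeeping for the second equivalence: since $\per_{\dg}(R)$ naturally enhances $\per(R)\subset \D(R\text{-}\Mod)$ rather than anything in $\D_n(R\text{-}\Mod)$ directly, one must carefully pass through the homotopy injective resolutions provided by Corollary \ref{homotopy injective resolution} to ensure $\bar\Delta_2$ lands in an honest dg enhancement of $\D_n(R\text{-}\Mod)$, without implicit quasi-isomorphism shortcuts.
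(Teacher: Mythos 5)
Your proposal takes essentially the same route as the paper: the paper likewise defines the restricted Yoneda functor sending a periodic complex $I$ to the dg module $\overline{\Hom_R(-,I)}$ on the orbit dg category (via Lemma \ref{induce a functor} and Example \ref{example}), identifies its value on compressed compacts with the Yoneda embedding, and concludes by Proposition \ref{embedding} (your displayed adjunction/compactness computation of Homs) together with Lemma \ref{test equivalence}, handling the second equivalence "similarly" through resolutions just as you indicate. The only hypothesis you leave unchecked is that your $\Psi_i$ preserve coproducts, which Lemma \ref{test equivalence} requires and which the paper verifies using that the objects of $\C_{\dg}^{+,f}(R\text{-}\Inj)$ (resp. $\per_{\dg}(R)$) are compact in $\K(R\text{-}\Inj)$ (resp. $\D(R\text{-}\Mod)$) --- a fact you already invoke for generation.
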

\begin{proof}
We prove the first equivalence. The proof of the second one is similar. 
For each complex $X$ of $R$-modules, set $X^{\wedge}=\Hom_R(-,X)$. By Lemma \ref{induce a functor}  and Example \ref{example}, the map $I\mapsto \overline{{I^{\wedge}}}\mid_{\C_{\dg}^{+,f}(R\text{-}\Inj )/[n]}$ induces an exact functor 
$$\Phi\colon \K_n(R\text{-}\Inj)\longrightarrow \D(\C_{\dg}^{+,f}(R\text{-}\Inj)/[n]).$$
The functor $\Phi$ preserves coproducts. Indeed, for each object $J\in \C_{\dg}^{+,f}(R\text{-}\Inj)$ and a family $I_i\in \K_n(R\text{-}\Inj)$ ($i\in S$), we have isomorphisms
\begin{align*}
    \h^l(\Phi(\coprod_{i\in S}I_i)(J))\cong & \Hom_{\K(R\text{-}\Inj)}(J[-l], \coprod_{i\in S}I_i)\\
    \cong& \coprod_{i\in S}\Hom_{\K(R\text{-}\Inj)}(J[-l],I_i)\\
    \cong&\coprod_{i\in S}\h^l(\Phi(I_i)(J))\\
    \cong &\h^l(\coprod_{i\in S}\Phi(I_i)(J))
\end{align*}
for each $l\in \Z$,
where the second isomorphism is because $J$ is a compact object in $\K(R\text{-}\Inj)$; see \ref{compact object in K(Inj R)}. Hence in $\D(\C_{\dg}^{+,f}(R\text{-}\Inj )/[n])$,
$$
\Phi(\coprod_{i\in S}I_i)\cong \coprod_{i\in S}\Phi(I_i).
$$
We observe that there exists a commutative diagram
\begin{equation}\label{comm}
\xymatrix{
\h^0(\C_{\dg}^{+,f}(R\text{-}\Inj/[n])\ar[r]^-{\overline{\Delta}}\ar[d]_-{\Y}& \K_n(R\text{-}\Inj )^{\com}\ar[d]^-{\mathrm{inc}}\\
\D(\C_{\dg}^{+,f}(R\text{-}\Inj)/[n])& \K_n(R\text{-}\Inj)\ar[l]_-{\Phi}
},
\end{equation}
where $\Y$ is the Yoneda embedding. 
From Proposition \ref{embedding},  $$\overline{\Delta} \colon \h^0(\C_{\dg}^{+,f}(R\text{-}\Inj)/[n])\longrightarrow \K_n(R\text{-}\Inj)^{\com}$$ 
is fully faithful and $\K_n(R\text{-}\Inj)$ is compactly generated by the image of $\overline{\Delta}$.   As $\D(\C_{\dg}^{+,f}(R\text{-}\Inj)/[n])$ is compactly generated by the image of $\Y$, we conclude that $\Phi$ is an equivalence by Lemma \ref{test equivalence}.
\end{proof}
\begin{remark}
(1) Let $k$ be a field. When $R$ is a finite dimensional $k$-algebra with finite global dimension, the triangle equivalence $\D_n(R\text{-}\Mod)\stackrel{\sim}\rightarrow\D(\per_{\dg} (R)/[n])$ was proved by Stai with a different method; see \cite[Section 4]{Stai}. 

(2) Let $\B$  $(\text{resp. } \A)$ denote $\per_{\dg}(R)/[n]$ $(\text{resp. } \C_{\dg}^{+,f}(R\text{-}\Inj)/[n])$. We can regard $\B$ as a full dg subcategory of $\A$; see \ref{compact object in K(Inj R)}. Then we can form a dg quotient category $\A/\B$; see Keller's construction in \cite[Section 4]{Keller99}. 
The restriction functor $\D(\A/\B)\rightarrow \D(\A)$ is fully faithful and it's essential image is equal to the kernel of the restriction functor $
\D(\A)\rightarrow \D(\B)$; see \cite[Section 4]{Keller99} and \cite[Proposition 4.6]{Drinfeld}. Combine this with Theorem \ref{recollement} and Theorem \ref{main result}, we conclude that there is a triangle equivalence 
$$
\K_n^{\ac}(R\text{-}\Inj)\simeq \D(\A/\B).
$$
\end{remark}
\begin{chunk}\label{hull}
Let  $\A$ be a dg enhancement of a triangulated category $\T$. Assume the functor $F\colon \T\rightarrow \T$ is an autoequivalence and it lifts to a dg equivalence $ \A\rightarrow \A$ (still denoted $F$). Then we can form an orbit category $\A/F$ which naturally inherits a structure of the dg category and gives the desired enhancement of $\T/F$. Hence
$$
\T/F\stackrel{\sim}\longrightarrow \h^0(\A/F)\stackrel{\Y}\longrightarrow \D(\A/F),
$$
where $\Y$ is the Yoneda embedding. The \emph{triangulated hull} of $\T/F$ is chosen to be the triangulated subcategory of $\D(\A/F)$ generated by the image of $\Y$. It is up to direct summands equivalent to $\D(\A/F)^{\com}$. Thus we use $\D(\A/F)^{\com}$ to represent the triangulated hull of $\T/F$ in the article; see  Keller's definition in \cite[Section 5]{Keller05} for a broader definition of the triangulated hull.
\end{chunk}

The inverse of the equivalence $\K(R\text{-}\Inj)^{\com}\stackrel{\sim}\rightarrow \D^b(R\text{-}\mo)$ (see \ref{compact object in K(Inj R)}) is induced by taking injective resolution. We denote it by $\mathsf{i}$.

\begin{corollary}\label{embedding in hull}
Let $R$ be a left noetherian ring. Induced by the  compression of complexes, the functors 
$$
\overline{\Delta\circ \mathsf{i}}\colon \D^b(R\text{-}\mo)/[n]\longrightarrow \K_n(R\text{-}\Inj )^{\com}\text{ and }~\overline{\Delta}\colon\per(R)/[n]\longrightarrow \D_n(R\text{-}\Mod )^{\com}
$$
are the embedding of the orbit categories into their triangulated hull. 
\end{corollary}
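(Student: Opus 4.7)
The plan is to deduce this corollary directly from Theorem \ref{main result} together with the abstract description of the triangulated hull recalled in \ref{hull}. The two statements are formally parallel, so I would write them up in a single argument with the enhancement $\A = \C^{+,f}_{\dg}(R\text{-}\Inj)$ (for the first functor) or $\A = \per_{\dg}(R)$ (for the second).

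First, by \ref{compact object in K(Inj R)} and Example \ref{example of dg enhancement}, $\C^{+,f}_{\dg}(R\text{-}\Inj)$ is a dg enhancement of $\D^b(R\text{-}\mo)$, and $\per_{\dg}(R)$ is a dg enhancement of $\per(R)$. The suspension functor $[n]$ lifts in each case to a genuine dg autoequivalence of the enhancement, since shifting is defined on the nose on complexes and preserves the Hom complexes strictly. Applying the construction in \ref{hull} to these dg enhancements with $F = [n]$, the triangulated hulls of $\D^b(R\text{-}\mo)/[n]$ and $\per(R)/[n]$ are, by definition,
\[
\D\bigl(\C^{+,f}_{\dg}(R\text{-}\Inj)/[n]\bigr)^{\com} \quad \text{and} \quad \D\bigl(\per_{\dg}(R)/[n]\bigr)^{\com},
\]
with the embedding of the orbit category given in both cases by the Yoneda functor $\Y$ followed by restricting to compacts (noting that $\h^0(\A/[n]) \cong \h^0(\A)/[n]$, as the orbit-category Hom is a coproduct of complexes whose zeroth cohomology is the coproduct of zeroth cohomologies).

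Next, Theorem \ref{main result} supplies triangle equivalences
\[
\Phi_1\colon \K_n(R\text{-}\Inj) \stackrel{\sim}\longrightarrow \D\bigl(\C^{+,f}_{\dg}(R\text{-}\Inj)/[n]\bigr), \quad \Phi_2\colon \D_n(R\text{-}\Mod) \stackrel{\sim}\longrightarrow \D\bigl(\per_{\dg}(R)/[n]\bigr),
\]
which restrict to equivalences of compact objects. To finish, I need to check that under $\Phi_1$ the Yoneda embedding $\Y$ corresponds to $\overline{\Delta \circ \mathsf{i}}$, and under $\Phi_2$ to $\overline{\Delta}$. For the first, this is exactly the content of the commutative square (\ref{comm}) appearing in the proof of Theorem \ref{main result}: composing the top edge $\overline{\Delta}$ on $\h^0(\C^{+,f}_{\dg}(R\text{-}\Inj)/[n])$ with the inclusion into $\K_n(R\text{-}\Inj)$ and then with $\Phi_1$ gives $\Y$. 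Identifying $\h^0(\C^{+,f}_{\dg}(R\text{-}\Inj))$ with $\D^b(R\text{-}\mo)$ via $\mathsf{i}$ translates the top edge into $\overline{\Delta \circ \mathsf{i}}$, as required. An entirely analogous diagram (with $\C^{+,f}_{\dg}(R\text{-}\Inj)$ replaced by $\per_{\dg}(R)$, no injective resolution needed since $\per(R)$ is already embedded in $\D_n(R\text{-}\Mod)$ directly) handles the second functor.

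The only point that requires genuine care, and which I view as the main obstacle, is the identification of $\h^0$ of the dg orbit category with the orbit category of $\h^0$, together with the compatibility of Yoneda embeddings under the equivalence $\Phi_i$; everything else is bookkeeping. Once that compatibility is recorded, the two embeddings land precisely in the triangulated hulls as defined in \ref{hull}, which is the conclusion of the corollary.
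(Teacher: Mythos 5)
Your proposal is correct and follows essentially the same route as the paper: both deduce the corollary from Theorem \ref{main result} together with the description of the triangulated hull in \ref{hull}, using that $\C^{+,f}_{\dg}(R\text{-}\Inj)/[n]$ and $\per_{\dg}(R)/[n]$ are dg enhancements of the orbit categories and that the square (\ref{comm}) identifies the Yoneda embedding with $\overline{\Delta\circ \mathsf{i}}$ (resp.\ $\overline{\Delta}$). Your write-up merely makes explicit the compatibility checks that the paper leaves implicit.
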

\begin{proof}
For the first one: $\C^{+,f}_{\dg}(R\text{-}\Inj )$ is the dg enhancement of $\D^b(R\text{-}\mo)$. Then  $\C_{\dg}^{+,f}(R\text{-}\Inj)/[n]$ is the desired dg enhancement of $\D^b(R\text{-}\mo)/[n]$.  Given this and \ref{hull}, the desired result follows from Theorem \ref{main result}.

For the second one: $\per_{\dg}(R)$ is the dg enhancement of $\per (R)$. Then the remaining proof is parallel to the first one.
\end{proof}

\begin{remark}\label{Grothendieck category}
Fix a \emph{locally noetherian} Grothendieck category $\A$. That is, $\A$ is an AB4 category with exact direct colimit, and $\A$ has a set $\A_0$ of noetherian objects such that every object in $\A$ is a quotient of a coproduct of objects in $\A_0$. Denote by $\A\text{-}\text{noeth}$ the full subcategory of $\A$ formed by noetherian objects, and by $\A\text{-}\Inj$ the full subcategory of $\A$ formed by injective objects. With the same argument of Theorem \ref{main result}, we have 
$
\K_n(\A\text{-}\Inj)\stackrel{\sim}\rightarrow \D(\C_{\dg}^{+,f}(\A\text{-}\Inj)/[n]),
$
where $\C_{\dg}^{+,f}(\A\text{-}\Inj)$ is the dg category of bounded below complexes of injective objects with noetherian total cohomology. Then the same proof of Corollary \ref{embedding in hull} yields that the compression of complexes 
$$
\overline{\Delta\circ \mathsf{i}}\colon \D^b(\A\text{-noeth})/[n]\longrightarrow \K_n(\A\text{-}\Inj )^{\com}
$$
induces an embedding of the orbit category into its triangulated hull.
\end{remark}

\begin{chunk}\label{finite global dimension case}
It was proved by Stai \cite[Lemma 3.5]{Stai} when $R$ has finite global dimension, then every object in $\C_1(R\text{-}\mo)$ is quasi-isomorphic to one admitting a finite projective flag (see definition in Example \ref{finite projective flag}). Thus $\D_1(R\text{-}\mo)$ is equal to the thick subcategory generated by $\Delta(R)$. As he also mentioned, this extends to any $n\geq 1$. Combine with Proposition \ref{embedding}, there is a natural triangle equivalence
$$
\D_n(R\text{-}\mo)\stackrel{\sim}\longrightarrow\D_n(R\text{-}\Mod)^{\com}.
$$

With the same method of Stai, one can show: if $\A$ is an abelian category with enough projective objects and every object in $\A$ has finite projective dimension, then
$$
\D_n(\A)=\thick_{\D_n(\A)}(\{\Delta(P)\mid P \text{ is projective in } \A\}).
$$
\end{chunk}

The following result was proved independently by Stai \cite[Theorem 4.3]{Stai} and Zhao \cite[Theorem 2.10]{Zhao} when $R$ is a finite  dimensional
algebra with finite global dimension over a field.
\begin{corollary}\label{SZ}
Let $R$ be a left noetherian ring with finite global dimension, then the compression of complexes
$$
\overline{\Delta}\colon\D^b(R\text{-}\mo)/[n]\longrightarrow \D_n(R\text{-}\mo)
$$
is an embedding of the orbit category into its triangulated hull.
\end{corollary}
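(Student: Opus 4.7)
The plan is to deduce this from the second half of Corollary \ref{embedding in hull} by using the finite global dimension hypothesis to identify both the source and the target with their ``$\Mod$''-versions.

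First, I would argue that the source categories coincide. Since $R$ is left noetherian with finite global dimension, every finitely generated $R$-module admits a finite resolution by finitely generated projective $R$-modules. Therefore $\D^b(R\text{-}\mo) = \per(R)$ as triangulated subcategories of $\D(R\text{-}\Mod)$, and consequently $\D^b(R\text{-}\mo)/[n] = \per(R)/[n]$ as orbit categories. In particular, the two compression functors
\[
\overline{\Delta}\colon \D^b(R\text{-}\mo)/[n] \longrightarrow \D_n(R\text{-}\mo)
\quad\text{and}\quad
\overline{\Delta}\colon \per(R)/[n] \longrightarrow \D_n(R\text{-}\Mod)^{\com}
\]
have canonically identified domains, and on objects they act by the same formula (compression of a bounded complex of finitely generated projectives).

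Next, I would invoke \ref{finite global dimension case}, which under finite global dimension provides a triangle equivalence $\D_n(R\text{-}\mo) \stackrel{\sim}{\longrightarrow} \D_n(R\text{-}\Mod)^{\com}$. This equivalence commutes with $\Delta$ (it is induced by the inclusion $R\text{-}\mo \hookrightarrow R\text{-}\Mod$ on the level of periodic complexes), so it intertwines the two compression functors above. By Corollary \ref{embedding in hull}, the latter functor is the embedding of $\per(R)/[n]$ into its triangulated hull. Transporting this statement along the identifications at the source and the target gives the claim.

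The only point that deserves care — and which I would expect to be the mildly subtle step — is confirming that the target identification $\D_n(R\text{-}\mo) \simeq \D_n(R\text{-}\Mod)^{\com}$ is compatible with the notion of triangulated hull. Concretely, one needs to check that the dg enhancement $\per_{\dg}(R)$ used in \ref{hull} to define the triangulated hull of $\per(R)/[n]$ yields, via the same equivalence, the correct enhancement of $\D^b(R\text{-}\mo)/[n]$ whose derived category of compact objects is $\D_n(R\text{-}\mo)$. This is immediate from the uniqueness (up to quasi-equivalence) of the dg enhancement of $\per(R) = \D^b(R\text{-}\mo)$ and the fact that $[n]$ lifts to a dg autoequivalence of either enhancement; no new computation is needed once the dictionary from \ref{hull} and Theorem \ref{main result} is invoked.
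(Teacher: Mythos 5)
Your proposal is correct and follows essentially the same route as the paper: identify $\D^b(R\text{-}\mo)=\per(R)$ using finite global dimension, invoke \ref{finite global dimension case} for the equivalence $\D_n(R\text{-}\mo)\simeq\D_n(R\text{-}\Mod)^{\com}$, and conclude from Corollary \ref{embedding in hull}. Your extra remark on compatibility of the target identification with the triangulated hull is a reasonable elaboration of a point the paper leaves implicit, but it introduces no new ingredient.
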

\begin{proof}
When $R$ has finite global dimension, $\D^b(R\text{-}\mo)=\per (R)$. Combine with \ref{finite global dimension case}, the desired result follows from Corollary \ref{embedding in hull}.
\end{proof}
\begin{chunk}
When $R$ is hereditary, $\D^b(R\text{-}\mo)/[n]$ is triangulated and hence it is (up to direct summands) equivalent to its triangulated hull; see \cite[Theorem 1]{Keller05} and \cite[Proposition 5.3]{Stai}. When $n=1$ and $R$ is a path algebra of finite connected acyclic quiver, Ringel and Zhang \cite[Theorem 1]{RZ} proved that $\D^b(R\text{-}\mo)/[1]$ is equivalent to a stable category of certain Frobenius category.
\end{chunk}

\section{Derived equivalence as derived tensor product}\label{Section 4}

For two rings $A$ and $B$, the purpose of this section is to compare the triangle equivalences $\D(A\text{-}\Mod)\simeq  \D(B\text{-}\Mod)$  and $\D_n(A\text{-}\Mod)\simeq \D_n(B\text{-}\Mod)$. It turns out that these two equivalences are closely related; see Proposition \ref{derived equivalence}.

\begin{chunk}\label{definition of tensor product}
\textbf{Tensor products.}
Let $X$ be a complex of $B\text{-}A$ bimodules. For a $n$-periodic complex $Y$ in $\C_n(A\text{-}\Mod)$, the tensor product $X\otimes_A Y$ is a $n$-periodic complex in $\C_n(B\text{-}\Mod)$.
Thus $X\otimes_A-$ gives a functor
$$
X\boxtimes_A-\colon\C_n(A\text{-}\Mod)\longrightarrow \C_n(B\text{-}\Mod).
$$
The notation $X\boxtimes_A-$ is to distinguish it from $X\otimes_A-\colon \C(A\text{-}\Mod)\rightarrow \C(B\text{-}\Mod)$.
Moreover, the following diagram 
\begin{equation}\label{tensor commutes with Delta}
    \xymatrix{
\C(A\text{-}\Mod)\ar[d]_-{\Delta}\ar[r]^-{X\otimes_A-}&\C(B\text{-}\Mod)\ar[d]^-{\Delta}\\
\C_n(A\text{-}\Mod)\ar[r]^-{X\boxtimes_A-}& \C_n(B\text{-}\Mod).
}
\end{equation}
is commutative; see \cite[(1.9.4)]{ABI} for the case $n=1$.
\end{chunk}

\begin{chunk}\label{derived tensor}
Keep the same assumption as \ref{definition of tensor product}. Since $X\boxtimes_A-$ preserves homotopy, suspensions and mapping cones, it induces an exact functor $X\boxtimes_A-\colon \K_n(A\text{-}\Mod)\rightarrow \K_n(B\text{-}\Mod)$.
We define the \emph{derived tensor product} $X\boxtimes_A^{\mathsf{L}}-$ by the following composition
$$
\xymatrix{
\D_n(A\text{-}\Mod)\ar[r]^-{\mathsf{p}}& \K_n(A\text{-}\Mod)\ar[r]^-{X\boxtimes_A-}& \K_n(B\text{-}\Mod)\ar[r]^-{Q}&\D_n(B\text{-}\Mod)
}
$$
where $\mathsf{p}$ is the left adjoint of the canonical functor $\K_n(A\text{-}\Mod)\rightarrow \D_n(A\text{-}\Mod)$; see Remark \ref{homotopy projective} for its existence.
The compression functor $\Delta\colon \K(A\text{-}\Mod)\rightarrow 
\K_n(A\text{-}\Mod)$ preserves homotopy projective objects because its right adjoint preserves acyclic complexes. Combine this with (\ref{tensor commutes with Delta}), we observe that there exists a commutative diagram
$$
\xymatrix{
\D(A\text{-}\Mod)\ar[d]_-{\Delta}\ar[r]^-{X\otimes^{\mathsf{L}}_A-}&\D(B\text{-}\Mod)\ar[d]^-{\Delta}\\
\D_n(A\text{-}\Mod)\ar[r]^-{X\boxtimes_A^{\mathsf{L}}-}& \D_n(B\text{-}\Mod).
}
$$
\end{chunk}
For a triangulated category $\T$, we write $\Sigma_{\T}$ to be the suspension functor of $\T$.
\begin{lemma}\label{full faithful property}
Let $F\colon \T\rightarrow \T^\prime$ be an exact functor between triangulated categories. Then $F$ is fully faithful if and only if the induced functor $\overline{F}\colon\T/\Sigma_{\T}^n\rightarrow \T^\prime/\Sigma_{\T^\prime}^n$ is fully faithful. Moreover, $F$ is an equivalence if and only if $\overline{F}$ is an equivalence.
\end{lemma}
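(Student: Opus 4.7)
My plan is to derive both equivalences from the explicit graded structure of the orbit-category Hom spaces. By construction
\[
\Hom_{\T/\Sigma_\T^n}(X,Y)=\coprod_{i\in\Z}\Hom_\T(X,\Sigma_\T^{ni}Y),
\]
and similarly for $\T^\prime$. Since $F$ is exact it commutes with the suspensions up to natural isomorphism, so $\overline F$ is $\Z$-graded on morphism spaces, with degree-$i$ component the induced map $F\colon\Hom_\T(X,\Sigma_\T^{ni}Y)\to\Hom_{\T^\prime}(FX,\Sigma_{\T^\prime}^{ni}FY)$. Thus $\overline F$ is fully faithful iff every such component is bijective. As $X,Y$ and $i$ vary, the pair $(X,\Sigma_\T^{ni}Y)$ ranges over all pairs of objects of $\T$ (since $\Sigma_\T^n$ is an autoequivalence), so this is equivalent to $F$ being fully faithful, giving the first claim.

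For the equivalence statement, by the fully-faithful part it remains to show that $F$ is essentially surjective iff $\overline F$ is. The forward direction is immediate since $\T^\prime$ and $\T^\prime/\Sigma_{\T^\prime}^n$ share the same objects. For the converse, given $Y\in\T^\prime$ and an isomorphism $\alpha=\sum_i\alpha_i\colon FX\to Y$ in $\T^\prime/\Sigma_{\T^\prime}^n$ with inverse $\beta=\sum_j\beta_j$, expanding $\alpha\beta=\id_Y$ in degree $0$ expresses $\id_Y$ as the composition
\[
Y\xrightarrow{(\beta_j)_j}\bigoplus_j\Sigma_{\T^\prime}^{nj}FX\cong F\bigl(\bigoplus_j\Sigma_\T^{nj}X\bigr)\xrightarrow{(\Sigma^{nj}\alpha_{-j})_j}Y,
\]
exhibiting $Y$ as a direct summand of $FM$ with $M=\bigoplus_j\Sigma_\T^{nj}X\in\T$ via an idempotent $e\in\End_{\T^\prime}(FM)$. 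Since $F$ is fully faithful, $e$ lifts to a unique idempotent $\tilde e\in\End_\T(M)$, and splitting $\tilde e$ produces $X^\prime\in\T$ with $FX^\prime\cong Y$. The dual relation $\beta\alpha=\id_{FX}$ confirms that this splitting is compatible with the given $\alpha,\beta$.

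I expect the main obstacle to be this final splitting step, which requires $\T$ (and $\T^\prime$) to admit splittings of idempotents. This holds automatically in all the triangulated categories relevant to the applications in the paper (compactly generated categories and bounded derived categories of noetherian algebras), so the argument goes through in the intended setting. The remainder of the proof is a routine bookkeeping against the $\Z$-grading on Hom spaces in the orbit category.
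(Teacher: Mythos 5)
Your proof of the first assertion is the same as the paper's: both decompose $\Hom_{\T/\Sigma_{\T}^n}(X,Y)=\coprod_{i\in\Z}\Hom_{\T}(X,\Sigma_{\T}^{ni}Y)$ and observe that $\overline{F}$ is the coproduct of the componentwise maps given by $F$ followed by the canonical isomorphism $F\Sigma_{\T}\cong\Sigma_{\T^\prime}F$, so $\overline{F}$ is bijective on Hom's iff every component is, the $i=0$ component being $F$ itself. The difference is in the ``moreover''. The paper disposes of it in one sentence --- since $\overline{F}(X)=F(X)$ on objects, it ``follows from the first statement'' --- which covers the direction ($F$ equivalence $\Rightarrow$ $\overline{F}$ equivalence) but passes silently over exactly the point you isolate: essential surjectivity of $\overline{F}$ only provides an isomorphism $FX\cong Y$ in the orbit category, and such an isomorphism need not be homogeneous, so it does not immediately give $Y\cong\Sigma_{\T^\prime}^{ni}FX$ in $\T^\prime$. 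Your retract argument (read off the degree-zero part of $\alpha\beta=\id_Y$, lift the resulting idempotent along the fully faithful $F$, split it) is a correct way to close that direction, but, as you say yourself, it uses splitting of idempotents in $\T$, a hypothesis not present in the statement; so what you actually prove is the lemma under an idempotent-completeness assumption rather than for arbitrary triangulated categories, and neither your argument nor the paper's one-liner settles the hard direction in full generality. For the paper this costs nothing: the only invocations of Lemma \ref{full faithful property} (in Lemma \ref{induce equivalence}) use the fully faithful statement and the easy direction, and the categories to which it is applied there are subcategories of compact objects of compactly generated categories, which are idempotent complete, so your extra hypothesis is automatic in every intended application.
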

\begin{proof}
Since $\overline{F}(X)=F(X)$ for each object $X\in \T$, the second statement follows from the first one. Fix objects $X,Y\in \T$, we observe that the map 
$$
\overline{F}\colon \coprod_{i\in \Z}\Hom_{\T}(X,\Sigma_{\T}^{ni}Y)\longrightarrow \coprod_{i\in \Z}\Hom_{\T^\prime}(F(X),\Sigma_{\T^\prime}^{ni}F(Y))
$$
is the direct sum of the following composition maps
$$
\Hom_{\T}(X,\Sigma_{\T}^{ni}Y)\stackrel{F}\longrightarrow\Hom_{\T^\prime}(F(X),F(\Sigma_{\T}^{ni}Y))\cong \Hom_{\T^\prime}(F(X),\Sigma_{\T^\prime}^{ni}F(Y)),
$$
where the isomorphisms are induced by the canonical isomorphism $F\Sigma_{\T}\cong \Sigma_{\T^\prime}F$. The desired result follows.
\end{proof}

\begin{lemma}\label{induce equivalence}
Let $F,G,\Phi_1,\Phi_2$ are exact functors between compactly generated triangulated categories such that the following diagram
$$
\xymatrix{
\S_1\ar[r]^-F\ar[d]_-{\Phi_1}& \S_2\ar[d]^-{\Phi_2}\\
\T_1\ar[r]^-G&\T_2
}
$$
commutes.
Assume $F,G$ preserves coproducts and $\Phi_i$ preserves compact objects for $i=1,2$. Moreover, we assume $\Phi_i$ induces a fully faithful functor
$$
\overline{\Phi_i}\colon \S_i^{\com}/\Sigma_{\S_i}^n\longrightarrow \T_i^{\com}
$$
such that $\T_i$ is compactly generated by its image.
Then we have implications:

(1) $F$ is an equivalence $\Rightarrow G$ is an equivalence.

(2) $F$ preserves compact objects and $G$ is an equivalence $\Rightarrow F$ is fully faithful.
\end{lemma}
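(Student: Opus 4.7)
The plan is to apply Lemma \ref{test equivalence} in both parts, using as compact generating sets the images of $\overline{\Phi_i}$ in $\T_i$ and any compact generators of $\S_i$ (for instance $\S_i^{\com}$ itself). The backbone of the argument is the identification
$$
\Hom_{\T_i}\bigl(\Phi_i X,\ \Sigma_{\T_i}^{k}\Phi_i Y\bigr)\;\cong\;\coprod_{j\in\Z}\Hom_{\S_i}\bigl(X,\ \Sigma_{\S_i}^{k+nj}Y\bigr)
$$
for $X,Y\in\S_i^{\com}$ and $k\in\Z$. This comes from the fully faithfulness of $\overline{\Phi_i}\colon\S_i^{\com}/\Sigma_{\S_i}^n\to\T_i^{\com}$ combined with the fact that $\Phi_i$ is exact, so $\Sigma^k\Phi_iY\cong\Phi_i\Sigma^kY$. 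The commutativity $G\Phi_1=\Phi_2F$ together with the naturality of $\overline{\Phi_i}$ then translates the map induced by $G$ between the left-hand sides into the direct sum $\coprod_{j}F$ between the right-hand sides.

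For part (1), assume $F$ is an equivalence. Since equivalences preserve compact objects, $F$ restricts to an equivalence $\S_1^{\com}\xrightarrow{\sim}\S_2^{\com}$. Each component $F\colon\Hom_{\S_1}(X,\Sigma^{k+nj}Y)\to\Hom_{\S_2}(FX,\Sigma^{k+nj}FY)$ is then an isomorphism, so the induced direct sum is an isomorphism. Via the backbone identification, this means $G$ is fully faithful on the compact generating set $\{\Phi_1X\mid X\in\S_1^{\com}\}$ of $\T_1$, so Lemma \ref{test equivalence} yields that $G$ is fully faithful on all of $\T_1$. For density, observe that
$$
G\bigl(\Phi_1(\S_1^{\com})\bigr)\;=\;\Phi_2\bigl(F(\S_1^{\com})\bigr)\;=\;\Phi_2(\S_2^{\com})
$$
(up to isomorphism), which compactly generates $\T_2$ by hypothesis. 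The second assertion of Lemma \ref{test equivalence} then gives that $G$ is an equivalence.

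For part (2), assume $F$ preserves compact objects and $G$ is an equivalence. Apply Lemma \ref{test equivalence} to $F$ with the compact generating set $\S_1^{\com}$. For $X,Y\in\S_1^{\com}$ and each $k\in\Z$, the fact that $G$ is an equivalence makes the map $G\colon\Hom_{\T_1}(\Phi_1X,\Phi_1\Sigma^kY)\to\Hom_{\T_2}(\Phi_2FX,\Phi_2\Sigma^kFY)$ an isomorphism. Via the backbone identification this map is the direct sum $\coprod_{j}F\colon\coprod_{j}\Hom_{\S_1}(X,\Sigma^{k+nj}Y)\to\coprod_{j}\Hom_{\S_2}(FX,\Sigma^{k+nj}FY)$. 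Since a direct sum of maps of abelian groups is an isomorphism if and only if each summand is, taking the $j=0$ component shows $F\colon\Hom_{\S_1}(X,\Sigma^{k}Y)\to\Hom_{\S_2}(FX,\Sigma^{k}FY)$ is an isomorphism for every $k$. Lemma \ref{test equivalence} then implies that $F$ is fully faithful.

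The main obstacle is the careful verification of the backbone identification and that $G$ indeed acts as $\coprod_j F$ on graded pieces. This amounts to checking that the fully faithful embeddings $\overline{\Phi_i}$ are natural with respect to the commutative square $G\Phi_1=\Phi_2F$, so that the canonical decomposition of $\Hom_{\T_i}(\Phi_iX,\Phi_iY)$ by suspension exponent is preserved by $G$. Once this compatibility is in place, everything else reduces to formal manipulations with direct sums and the compact generation criterion provided by Lemma \ref{test equivalence}.
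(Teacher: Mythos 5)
Your proposal is correct and follows essentially the same route as the paper: the ``backbone identification'' is exactly the full faithfulness of $\overline{\Phi_i}$ combined with the orbit-category Hom decomposition (what the paper packages via Lemma \ref{full faithful property} and the induced commutative square on $\S_i^{\com}/\Sigma_{\S_i}^n$), and both arguments then conclude with Lemma \ref{test equivalence} applied to the compact generators $\Phi_i(\S_i^{\com})$. The compatibility you flag (that $G$ acts diagonally, i.e.\ as $\coprod_j F$, on the graded pieces) is precisely the commutativity of the paper's orbit-category square, which the paper asserts at the same level of detail, so your treatment matches the paper's.
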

\begin{proof}
Combine with the assumption, the condition of (1) or (2) implies the diagram
$$
    \xymatrix{
\S_1^{\com}/\Sigma_{\S_1}^n\ar[r]^-{\overline{F}}\ar[d]_-{\overline{\Phi_1}}& \S_2^{\com}/\Sigma_{\S_2}^n\ar[d]^-{\overline{\Phi_2}}\\
\T_1^{\com}\ar[r]^-G&\T_2^{\com}.
}
$$
commutes. Indeed, this is trivial for (2). For (1), it remains to show that $G$ preserves compact objects. The assumption and the condition of (1) yield $G(\Ima\overline{\Phi_1})\subseteq \T_2^{\com}$. Since $\T_1$ is compactly generated by $\Ima \overline{\Phi_1}$, we have $\thick_{\T_1}(\Ima \overline{\Phi_1})=\T_1^{\com}$; see \ref{Neeman's description of compact objects}. On the other hand,  the full subcategory
$
\{X\in \T_1\mid G(X)\in \T_2^{\com}\}
$
of $\T_1$ is thick. Thus $G$ preserves compact objects.

(1) Assume $F$ is equivalence. Then $\overline{F}$ is an equivalence. For $i=1,2$,  $\Ima \overline{\Phi_i}$ is a compact generating set of $\T_i$. Clearly $\Ima \overline{\Phi_i}$ is closed under suspensions. Then we apply Lemma \ref{test equivalence} to conclude that $G\colon \T_1\rightarrow \T_2$ is an equivalence.

(2) By assumption, $G$ induces an equivalence $G\colon \T_1^{\com}\stackrel{\sim}\longrightarrow\T_2^{\com}.$ 
Combine with $\overline{\Phi_i}$ is fully faithful for $i=1,2$, we get that $\overline{F}$ is fully faithful. Then Lemma \ref{full faithful property} yields the functor $F\colon \S_1^{\com}\rightarrow \S_2^{\com}$ is fully faithful. According to Lemma \ref{test equivalence}, $F$ is fully faithful.
\end{proof}
\begin{example}
Let $R$ be a commutative noetherian ring with a dualizing complex $\omega$. Iyengar and Krause \cite[Theorem I]{IY} proved that  $$\omega\otimes_R-\colon \K(R\text{-}\Proj)\longrightarrow \K(R\text{-}\Inj)$$ is a triangle equivalence.  Combine this result with Proposition \ref{embedding} and Lemma \ref{induce equivalence}, we immediately get that there is a triangle equivalence $$\omega\boxtimes_R-\colon \K_n(R\text{-}\Proj)\stackrel{\sim}\longrightarrow \K_n(R\text{-}\Inj).$$

\end{example}

Let $\Thick \T$ be the \emph{lattice of thick subcategories} of a triangulated category $\T$.
\begin{chunk}\label{maps of lattices}
Suppose $\A$ is an additive category with coproducts (\text{resp.} AB4 category). We write $\T$ to be $\K(\A)$ (\text{resp.} $\D(\A)$) and $\T^{\prime}$ to be $\K_n(\A)$ (\text{resp.} $\D_n(\A)$). For a thick subcategory $\S$ of $\T^{\com}$, we let $F(\S)$ be the smallest thick subcategory of $\T^{\prime\com}$ containing all objects $\Delta(X)$ such that $X\in\S$. For a thick subcategory $\S^\prime$ of $\T^{\prime\com}$, we let $G(\S^{\prime})$ be the smallest thick subcategory of $\T^{\com}$ containing all objects $X$ in $\T^{\com}$ such that $\Delta(X)\in \S^{\prime}.$ Thus we have maps of lattices
\[\begin{tikzcd}
\Thick \T^c\arrow[r,shift left=0.8ex,"F"]&\Thick T^{\prime\com}\arrow[l,shift left=0.8ex,"G"]
\end{tikzcd}.\]
\end{chunk}
Next result is inspired by a recent result of Iyengar, Letz, Pollitz and the author \cite[Corollary 5.9]{ILLP}. It is important in the proof of Proposition \ref{derived equivalence}.
\begin{lemma}\label{injection of map of lattices}
Keep the assumptions as \ref{maps of lattices}.  
 Then $G\circ F=\id$. In particular, the map of lattices $F\colon \Thick \T^{\com} \rightarrow \Thick \T^{\prime\com}$ is injective.
\end{lemma}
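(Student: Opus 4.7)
To prove $G \circ F = \id$, the plan is as follows. The inclusion $\S \subseteq G(F(\S))$ is immediate: every $X \in \S$ satisfies $\Delta(X) \in F(\S)$ by definition of $F$, so $X$ appears in the generating set of $G(F(\S))$. For the reverse inclusion, the key preliminary observation is that
\[
\S^\ast := \{X \in \T^{\com} : \Delta(X) \in F(\S)\}
\]
is already a thick subcategory of $\T^{\com}$: closure under direct summands holds because $\Delta$ is additive and $F(\S)$ is thick; closure under suspensions uses $\Delta \circ [1] = [1] \circ \Delta$; and closure under cones uses that $\Delta$ is exact, so $\Delta(\cone(f)) \cong \cone(\Delta(f))$. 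Therefore $G(F(\S)) = \S^\ast$, and what remains to prove is the following claim: \emph{if $X \in \T^{\com}$ and $\Delta(X) \in F(\S)$, then $X \in \S$.}

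I would prove this claim by applying the right adjoint $\nabla$. A routine verification---the preimage $\{Z \in \T^\prime : \nabla(Z) \in \thick_\T(\nabla\Delta(\S))\}$ is itself a thick subcategory of $\T^\prime$ containing $\Delta(\S)$---yields $\nabla\Delta(X) \in \thick_\T(\nabla\Delta(\S))$. By \ref{counit}, $\nabla\Delta(Y) \cong \coprod_{i \in \Z} Y[ni]$ for every $Y$, and each such infinite coproduct lies in the localizing subcategory $\mathsf{Loc}_\T(\S)$ of $\T$ generated by $\S$; since $\mathsf{Loc}_\T(\S)$ is automatically thick (as $\T$ has coproducts, by Eilenberg's swindle), we obtain $\nabla\Delta(X) \in \mathsf{Loc}_\T(\S)$. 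Because the unit $\eta_X \colon X \to \nabla\Delta(X) \cong \coprod_{i \in \Z} X[ni]$ is a split monomorphism---it is the inclusion into the zero-th summand, as recorded in \ref{counit}---the object $X$ is a direct summand of $\nabla\Delta(X)$, whence $X \in \mathsf{Loc}_\T(\S)$.

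The last and most delicate step is to deduce $X \in \S$ from the membership $X \in \mathsf{Loc}_\T(\S) \cap \T^{\com}$. This is precisely where the cited result \cite[Corollary 5.9]{ILLP} enters: it supplies the identity $\mathsf{Loc}_\T(\S) \cap \T^{\com} = \thick_{\T^{\com}}(\S) = \S$ in the generality required here, without an a priori compact-generation hypothesis on $\T$. I expect this to be the main technical obstacle, since the strategy via $\nabla$ reduces the lemma to a Neeman-type identification of lattices, and the delicate point is that $\T$ itself need not be compactly generated under the assumptions of \ref{maps of lattices}. Once $G \circ F = \id$ is established, the injectivity of $F$ is a formal consequence: $F(\S_1) = F(\S_2)$ forces $\S_1 = G(F(\S_1)) = G(F(\S_2)) = \S_2$.
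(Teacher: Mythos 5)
Your argument is correct and follows essentially the same route as the paper's own proof: apply $\nabla$, use $\nabla\Delta(M)\cong\coprod_{i\in\Z}M[ni]$ together with the split monomorphism $\eta_X$ to place $X$ in the localizing subcategory generated by $\S$, and then use compactness of $X$ to descend to the thick closure. The only divergence is the final citation: the paper reduces to a single generator $Y$ and invokes the classical Neeman--Thomason statement recorded in \ref{Neeman's description of compact objects} (citing \cite[Lemma 2.2]{Neeman92}), which holds for any set of compact objects in a triangulated category with coproducts and thus settles the step you flagged as delicate without any appeal to \cite{ILLP}.
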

\begin{proof}
Fix a thick subcategory $\S$ of $\T^c$. In order to show $GF(\S)=\S$, it suffices to show for $X,Y\in \T^c$,
$$
X\in \thick_{\T}(Y)\iff \Delta(X)\in \thick_{\T^\prime}(\Delta(Y)).
$$
The forward direction is trivial; see \cite[Lemma 2.4]{ABIM}. For the converse, assume $\Delta(X)$ is an object in $\thick_{\T^\prime}(\Delta(Y))$. Then we have $\nabla\Delta(X)\in \thick_{\T}(\nabla\Delta(Y))$. Since $\nabla\Delta(M)\cong \coprod_{i\in \Z}M[ni]$ for each $M\in \T$ (see \ref{counit}), $X$ is in the localizing subcategory of $\T$ generated by $Y$. As $X, Y$ are compact objects in $\T$, we conclude by \ref{Neeman's description of compact objects} that $X$ is in $\thick_{\T}(Y)$. As required.
\end{proof}

\begin{proposition}\label{derived equivalence}
Let $A,B$ be two rings and $X$ be a complex of $B\text{-}A$-bimodules. Then the functor $X\otimes_A^{\mathsf{L}}-\colon\D(A\text{-}\Mod )\rightarrow \D(B\text{-}\Mod)$ is a triangle equivalence if and only if the functor $X\boxtimes_A^{\mathsf{L}}-\colon\D_n(A\text{-}\Mod)\rightarrow\D_n(B\text{-}\Mod)$ is a triangle equivalence. 
\end{proposition}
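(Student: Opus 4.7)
My plan is to verify the hypotheses of Lemma \ref{induce equivalence} for the commutative square
\[
\xymatrix{
\D(A\text{-}\Mod)\ar[d]_-{\Delta}\ar[r]^-{X\otimes^{\mathsf{L}}_A-}&\D(B\text{-}\Mod)\ar[d]^-{\Delta}\\
\D_n(A\text{-}\Mod)\ar[r]^-{X\boxtimes_A^{\mathsf{L}}-}& \D_n(B\text{-}\Mod)
}
\]
recorded in \ref{derived tensor}, playing the role of $(\S_1,\S_2,\T_1,\T_2,F,G,\Phi_1,\Phi_2)$, and then read off both directions. All four categories are compactly generated: the upper corners by $A$ and $B$, and the lower corners by the images of $\overline{\Delta}$ (Theorem \ref{recollement} together with Proposition \ref{embedding}). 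The vertical functor $\Delta$ preserves compact objects by Lemma \ref{description of compact object}, and $\overline{\Delta}$ is fully faithful with compactly generating image by Proposition \ref{embedding}. The horizontal functors preserve coproducts: $X\otimes_A^{\mathsf{L}}-$ has the right adjoint $\RHom_B(X,-)$, while $X\boxtimes_A^{\mathsf{L}}-$ is a composition of coproduct-preserving functors $\mathsf{p}$ (a left adjoint, cf.\ Remark \ref{homotopy projective}), $X\boxtimes_A-$, and $Q$.

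For the forward direction, Lemma \ref{induce equivalence}(1) immediately yields that $X\boxtimes_A^{\mathsf{L}}-$ is an equivalence. For the converse, assume $G := X\boxtimes_A^{\mathsf{L}}-$ is an equivalence, and write $F := X\otimes_A^{\mathsf{L}}-$. I first check that $F$ preserves compactness: for compact $M\in\D(A\text{-}\Mod)$ the object $\Delta(FM)\cong G(\Delta M)$ is compact (since $G$ and $\Delta$ both preserve compactness), and the converse direction of Lemma \ref{description of compact object} then forces $FM$ to be compact. Lemma \ref{induce equivalence}(2) now delivers the full faithfulness of $F$.

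It remains to show $F$ is dense. By Lemma \ref{test equivalence}, it suffices to produce a compact generator of $\D(B\text{-}\Mod)$ in the image of $F$. Since $\D(A\text{-}\Mod)^{\com}=\thick(A)$, Lemma \ref{compactly generated} gives $\D_n(A\text{-}\Mod)^{\com}=\thick(\Delta(A))$; applying the equivalence $G$ and using $G(\Delta(A))\cong \Delta(X)$, I obtain $\D_n(B\text{-}\Mod)^{\com}=\thick(\Delta(X))$. Since also $\D_n(B\text{-}\Mod)^{\com}=\thick(\Delta(B))$, the injectivity statement of Lemma \ref{injection of map of lattices} applied to the pair $(\D(B\text{-}\Mod),\D_n(B\text{-}\Mod))$ forces $\thick(X)=\thick(B)=\D(B\text{-}\Mod)^{\com}$. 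Hence $F(A)\cong X$ is a compact generator of $\D(B\text{-}\Mod)$, so $F$ is dense and the proof is complete. I expect this density step to be the main obstacle: Lemma \ref{induce equivalence}(2) by itself only yields fully faithfulness, and the transfer of a compact generator from the periodic side back to the classical side, supplied by Lemma \ref{injection of map of lattices}, is the essential extra ingredient.
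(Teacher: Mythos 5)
Your proposal is correct and follows essentially the same route as the paper: Lemma \ref{induce equivalence}(1) for the forward direction, then compactness transfer via Lemma \ref{description of compact object}, full faithfulness via Lemma \ref{induce equivalence}(2), and density obtained by pushing a generation statement through the injective lattice map of Lemma \ref{injection of map of lattices} and invoking Lemma \ref{test equivalence}. The only cosmetic difference is that you work with the single compact generator $A$ (so $\thick(\Delta(X))=\thick(\Delta(B))$ forces $\thick(X)=\D(B\text{-}\Mod)^{\com}$), whereas the paper runs the same lattice argument with the full essential image $\S$ of $\D(A\text{-}\Mod)^{\com}$.
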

\begin{proof}
First, assume $X\otimes_A^{\mathsf{L}}-$ is a triangle equivalence.  It follows immediately from Proposition \ref{embedding}, \ref{derived tensor} and Lemma \ref{induce equivalence} that $X\boxtimes_R^{\mathsf{L}}-$ is a triangle equivalence.

Now, assume $X\boxtimes_A^{\mathsf{L}}-$ is a triangle equivalence. It restricts to an equivalence between the full categories of compact objects. Combine with the commutative diagram in \ref{derived tensor}, we conclude by Lemma \ref{description of compact object} that $X\otimes_A^{\mathsf{L}}-\colon \D(A\text{-}\Mod)\rightarrow \D(B\text{-}\Mod)$ preserves compact objects. It follows from Proposition \ref{embedding} and Lemma \ref{induce equivalence} that $X\otimes_A^{\mathsf{L}}-$ is fully faithful. 

To show $X\otimes_A^{\mathsf{L}}-$ is equivalence, by Lemma \ref{test equivalence} it remains to show the essential image of $X\otimes_A^{\mathsf{L}}-\colon \D(A\text{-}\Mod)^{\com}\rightarrow \D(B\text{-}\Mod)^{\com}$, denoted $\S$, is a compact generating set of $\D(B\text{-}\Mod)$. Consider the commutative diagram
$$
\xymatrix{
\D(A\text{-}\Mod)^{\com}\ar[d]_-{\Delta} \ar[r]^-{X\otimes_A^{\mathsf{L}}-}& \D(B\text{-}\Mod)^{\com}\ar[d]^-{\Delta}\\
\D_n(A\text{-}\Mod)^{\com}\ar[r]_{\sim}^-{X\boxtimes_A^{\mathsf{L}}-}& \D_n(B\text{-}\Mod)^{\com}\\
}
$$
we apply Proposition \ref{embedding} to get $\thick_{\D_n(B\text{-}\Mod)}(\Delta(\S))=\D_n(B\text{-}\Mod)^{\com}$. Then Lemma \ref{injection of map of lattices} yields the smallest thick subcategory of $\D(B\text{-}\Mod)^{\com}$ containing $\S$ is the whole of $\D(B\text{-}\Mod)^{\com}$. Hence $\S$ is a compact generating set of $\D(B\text{-}\Mod)$. As required.
\end{proof}

Two rings are \emph{derived equivalent} provided that $\D(A\text{-}\Mod)$ and $\D(B\text{-}\Mod)$ are equivalent as triangulated categories.
\begin{chunk}\label{standard equivalence}
It is an open question that whether any triangle equivalence
$$
\D(A\text{-}\Mod)\stackrel{\sim}\longrightarrow\D(B\text{-}\Mod)
$$ 
is isomorphic to a derived tensor functor $X\otimes_A^{\mathsf{L}}-$, where $X$ is a complex of $B\text{-}A$ bimodules. Such derived equivalence is called a \emph{standard equivalence}. 

However, if $A,B$ are two algebras over a commutative ring $k$ such that they are flat as $k$-modules, then any triangle equivalence 
$\D(A\text{-}\Mod)\stackrel{\sim}\rightarrow\D(B\text{-}\Mod)$ 
is standard; see \cite[Corollary 9.2]{Keller94} and \cite[Section 3]{Rickard1991}.
\end{chunk}
Combine with Proposition \ref{derived equivalence},  the statement in \ref{standard equivalence} implies the following result.
\begin{corollary}\label{implication}
Let $k$ be a commutative ring and $A,B$ be flat $k$-algebras. If $A$ and $B$ are derived equivalent, then $\D_n(A\text{-}\Mod)$ and $\D_n(B\text{-}\Mod)$ are equivalent as triangulated categories.
\end{corollary}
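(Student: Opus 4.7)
The plan is to chain together the two main ingredients already prepared in the excerpt: the fact from \ref{standard equivalence} that any derived equivalence between flat $k$-algebras is standard (that is, given by a derived tensor product with a bimodule complex), and Proposition \ref{derived equivalence}, which says that such a derived tensor functor is an equivalence on classical derived categories if and only if it is an equivalence on the derived categories of $n$-periodic complexes. Once both are invoked, the conclusion is immediate.

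More precisely, I would first assume a triangle equivalence $F\colon \D(A\text{-}\Mod)\xrightarrow{\sim}\D(B\text{-}\Mod)$. Since $A$ and $B$ are flat $k$-algebras over a commutative ring $k$, the result recalled in \ref{standard equivalence} (from Keller and Rickard) provides a complex $X$ of $B\text{-}A$-bimodules together with an isomorphism of functors $F\cong X\otimes_A^{\mathsf{L}}-$. Hence $X\otimes_A^{\mathsf{L}}-\colon \D(A\text{-}\Mod)\to \D(B\text{-}\Mod)$ is a triangle equivalence.

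Next I would apply Proposition \ref{derived equivalence} to this bimodule complex $X$: the forward direction of that proposition says precisely that $X\otimes_A^{\mathsf{L}}-$ being a triangle equivalence on the ordinary derived categories implies $X\boxtimes_A^{\mathsf{L}}-\colon \D_n(A\text{-}\Mod)\to \D_n(B\text{-}\Mod)$ is a triangle equivalence as well. This produces the desired equivalence of triangulated categories $\D_n(A\text{-}\Mod)\simeq \D_n(B\text{-}\Mod)$, completing the proof.

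There is essentially no obstacle here, because the hard technical work has already been done in Proposition \ref{derived equivalence} (which uses the compact generation results from Section~\ref{Section 2} together with Proposition \ref{embedding} and Lemma \ref{injection of map of lattices}) and in the cited Keller/Rickard theorem on standardness. The only thing worth checking is the compatibility of hypotheses: the flatness of $A,B$ over $k$ is used solely to ensure that the derived equivalence is standard, and after that the proof is a pure formal combination of the two results.
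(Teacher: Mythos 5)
Your proposal is correct and follows exactly the paper's route: the paper likewise cites \ref{standard equivalence} to replace the given derived equivalence by a standard one $X\otimes_A^{\mathsf{L}}-$ (using flatness over $k$), and then applies the forward direction of Proposition \ref{derived equivalence} to conclude that $X\boxtimes_A^{\mathsf{L}}-$ is a triangle equivalence $\D_n(A\text{-}\Mod)\simeq \D_n(B\text{-}\Mod)$. Your remark about where flatness enters matches the paper's use of it as well.
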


If $A$ is a left noetherian ring with finite global dimension, then $\D_n(A\text{-}\Mod)^{\com}=\D_n(A\text{-}\mo)$; see \ref{finite global dimension case}. As a consequence of Corollary \ref{implication}, we have:
\begin{corollary}\label{finite global}
Let $k$ be a commutative ring and $A,B$ be flat $k$-algebras. If $A,B$ are noetherian with finite global dimensions and $A,B$ are derived equivalent, then $\D_n(A\text{-}\mo)$ and $\D_n(B\text{-}\mo)$ are equivalent as triangulated categories.
\end{corollary}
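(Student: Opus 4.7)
The plan is to deduce the statement by combining Corollary \ref{implication} with the identification of $\D_n(R\text{-}\Mod)^{\com}$ given in \ref{finite global dimension case}. First, I would apply Corollary \ref{implication} to the flat $k$-algebras $A$ and $B$: since they are derived equivalent, we obtain a triangle equivalence
$$
\Phi\colon \D_n(A\text{-}\Mod)\stackrel{\sim}\longrightarrow \D_n(B\text{-}\Mod).
$$
Any triangle equivalence automatically sends compact objects to compact objects and restricts to an equivalence on the full subcategories of compact objects, so $\Phi$ induces an equivalence $\D_n(A\text{-}\Mod)^{\com}\simeq \D_n(B\text{-}\Mod)^{\com}$.

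Next I would invoke the identification recorded in \ref{finite global dimension case}: when $R$ is left noetherian of finite global dimension, every object of $\C_n(R\text{-}\mo)$ is quasi-isomorphic to one that admits a finite projective flag, and hence $\D_n(R\text{-}\mo) = \thick_{\D_n(R\text{-}\Mod)}(\Delta(R))$. On the other hand, Proposition \ref{embedding} combined with \ref{Neeman's description of compact objects} shows that $\D_n(R\text{-}\Mod)^{\com}$ is precisely the thick subcategory of $\D_n(R\text{-}\Mod)$ generated by the image of $\overline{\Delta}\colon \D(R\text{-}\Mod)^{\com}/[n]\to \D_n(R\text{-}\Mod)^{\com}$, and under our hypothesis $\D(R\text{-}\Mod)^{\com} = \per(R) = \D^b(R\text{-}\mo) = \thick_{\D(R\text{-}\Mod)}(R)$. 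Putting these together gives a natural triangle equivalence $\D_n(R\text{-}\mo)\simeq \D_n(R\text{-}\Mod)^{\com}$, applicable to both $R=A$ and $R=B$.

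Chaining the three equivalences yields
$$
\D_n(A\text{-}\mo)\simeq \D_n(A\text{-}\Mod)^{\com}\simeq \D_n(B\text{-}\Mod)^{\com}\simeq \D_n(B\text{-}\mo),
$$
which is the required conclusion. There is no substantive obstacle in this argument: it is a straightforward assembly of Corollary \ref{implication} with the already-established identification $\D_n(R\text{-}\Mod)^{\com}\simeq \D_n(R\text{-}\mo)$ for $R$ left noetherian of finite global dimension. The only point to keep in mind is that the flatness and finite global dimension hypotheses are both used essentially: flatness over $k$ is what lets us apply Corollary \ref{implication} (via the fact that derived equivalences between $k$-flat algebras are standard), while finite global dimension is what makes compact objects of $\D_n(R\text{-}\Mod)$ coincide with $\D_n(R\text{-}\mo)$.
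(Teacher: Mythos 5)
Your proposal is correct and follows essentially the same route as the paper: apply Corollary \ref{implication} to get $\D_n(A\text{-}\Mod)\simeq\D_n(B\text{-}\Mod)$, restrict to compact objects, and use the identification $\D_n(R\text{-}\mo)\simeq\D_n(R\text{-}\Mod)^{\com}$ from \ref{finite global dimension case} for left noetherian rings of finite global dimension. No gaps.
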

\begin{remark}
The above corollary extends a result of Zhao \cite[Theorem]{Zhao}. In her paper, she proved the above result holds for finite dimensional algebras with finite global dimensions over a field.
\end{remark}

\section{Koszul duality for periodic complexes}\label{Section 5}
Throughout this section, $k$ is a field and $S$ is the graded polynomial algebra $k[x_1,\ldots,x_c]$ with $\mathrm{deg}(x_i)=1$. We let $\Lambda$ denote the Koszul dual of $S$. More precisely, $\Lambda$ is the graded exterior algebra over $k$ on variables $\xi_1,\ldots,\xi_c$ of degree $-1$.

For a graded algebra $A$, denote by $A\text{-}\Gr$ (resp. $A\text{-}\gr$) the category of left (resp. finitely generated left) graded $A$-modules. A graded $A$-module is called \emph{graded-injective} provided that it is an injective object in $A\text{-}\Gr$. It is well-known that $A\text{-}\Gr$ has enough projective objects and enough injective objects; see \cite[Section 1.5 and Theorem 3.6.2]{BH}. 

Let $A\text{-}\GrInj$ denote the category of graded-injective $A$-module. As $\Lambda$ is noetherian, one can show that the direct sum of graded-injective $\Lambda$-module is graded-injective; the proof is parallel to the non-graded version \cite[Theorem 3.1.17]{EJ}.

The main purpose of this section is to give the following periodic version of the Koszul duality.
\begin{theorem}\label{Koszul duality}
There exists a triangle equivalence 
$$ 
\K_n(\Lambda\text{-}\GrInj )\stackrel{\sim}\longrightarrow \D_n(S\text{-} \Gr ).
$$
\end{theorem}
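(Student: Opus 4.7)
My plan is to reduce Theorem \ref{Koszul duality} to the classical Koszul duality $F\colon\K(\Lambda\text{-}\GrInj)\stackrel{\sim}\longrightarrow\D(S\text{-}\Gr)$ using the dg-categorical machinery of Section \ref{Section 3}.

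First, I would note that both $\Lambda\text{-}\Gr$ and $S\text{-}\Gr$ are locally noetherian Grothendieck categories, so the results of Section \ref{Section 2} and Theorem \ref{main result} carry over to them via the graded analogue indicated in Remark \ref{Grothendieck category}. Writing $\C_{\dg}^{+,f}(\Lambda\text{-}\GrInj)$ for the canonical dg enhancement of $\K(\Lambda\text{-}\GrInj)^{\com}\simeq\D^b(\Lambda\text{-}\gr)$ and $\per_{\dg}(S\text{-}\Gr)$ for the canonical dg enhancement of $\D(S\text{-}\Gr)^{\com}=\per(S\text{-}\Gr)$, one thus obtains triangle equivalences
$$
\K_n(\Lambda\text{-}\GrInj)\simeq\D\bigl(\C_{\dg}^{+,f}(\Lambda\text{-}\GrInj)/[n]\bigr)\ \text{ and }\ \D_n(S\text{-}\Gr)\simeq\D\bigl(\per_{\dg}(S\text{-}\Gr)/[n]\bigr).
$$
It therefore suffices to produce a triangle equivalence between the right-hand sides.

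Restricting $F$ to compact objects yields the classical BGG equivalence $\D^b(\Lambda\text{-}\gr)\simeq\D^b(S\text{-}\gr)=\per(S\text{-}\Gr)$, where the last identification uses that $S$ has finite global dimension. I would next lift this to a quasi-equivalence of dg enhancements
$$
F_{\dg}\colon\C_{\dg}^{+,f}(\Lambda\text{-}\GrInj)\stackrel{\sim}\longrightarrow\per_{\dg}(S\text{-}\Gr),
$$
either by realising $F$ as a standard equivalence induced by a tilting $S$-$\Lambda$ dg bimodule (the Koszul complex being the natural candidate), or by invoking the existence of dg lifts for equivalences between algebraic compactly generated triangulated categories. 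Since $F_{\dg}$ commutes up to a closed natural isomorphism with the shift $[n]$, Lemma \ref{induce a functor} produces an induced dg functor on the orbit categories which, being a quasi-equivalence on morphism complexes, gives a quasi-equivalence
$$
\C_{\dg}^{+,f}(\Lambda\text{-}\GrInj)/[n]\stackrel{\sim}\longrightarrow\per_{\dg}(S\text{-}\Gr)/[n].
$$
Passing to derived categories and concatenating with the two identifications above yields the desired equivalence $\K_n(\Lambda\text{-}\GrInj)\simeq\D_n(S\text{-}\Gr)$.

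The main obstacle is constructing $F_{\dg}$ and verifying its compatibility with $[n]$; the remainder of the argument is formal once Theorem \ref{main result} is established in the graded Grothendieck setting. An alternative route that parallels the proof of Proposition \ref{derived equivalence} would be to realise the compact part of the Koszul duality explicitly as a derived tensor with a bimodule complex, then use diagram \eqref{tensor commutes with Delta} to obtain the commuting square of Lemma \ref{induce equivalence}(1) with the graded version of Proposition \ref{embedding} supplying the vertical compression embeddings; this sidesteps any appeal to uniqueness of dg enhancements at the cost of a direct bimodule construction.
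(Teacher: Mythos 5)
Your closing ``alternative route'' is, in essence, the paper's actual proof: the Koszul duality is already realized at the chain level by the explicit functor $\Phi$ (tensoring with $\Lambda^{\ast}$ equipped with the twisted Koszul differential), which visibly sends $n$-periodic complexes to $n$-periodic complexes; this produces the commutative square relating $\Phi\circ\mathsf{p}$ and its periodic counterpart $\Phi'\circ\mathsf{p}'$ with the compression functors, and then Lemma \ref{induce equivalence}(1) together with Proposition \ref{embedding} and \ref{compact in K(GrInj)} (no bimodule formalism is needed, only that $\Delta$ preserves homotopy projectives as in \ref{derived tensor}) finishes the argument. Your primary route --- identifying both sides with derived categories of orbit dg categories via the graded analogue of Theorem \ref{main result} and Remark \ref{Grothendieck category}, and then transporting the BGG equivalence through a dg lift $F_{\dg}$ --- is a genuinely different and in principle viable strategy; what it buys is a statement purely about dg enhancements, but it costs you the construction of $F_{\dg}$, which is exactly where the weak point lies.

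Concretely, the appeal to ``the existence of dg lifts for equivalences between algebraic compactly generated triangulated categories'' is not a valid general principle: lifting a triangle equivalence to a dg (quasi-)equivalence is precisely the standardness problem that the paper itself records as open in \ref{standard equivalence}, and no abstract existence theorem covers an arbitrary equivalence. Your first option --- realizing the duality via the explicit Koszul bimodule --- does repair this, but at that point you have reconstructed the functor $\Phi$ of \ref{classical BGG}, and the detour through orbit dg categories becomes redundant compared with applying Lemma \ref{induce equivalence} directly. If you do pursue the dg route, you must also check that the lifted functor lands in (a dg category quasi-equivalent to) $\C_{\dg}^{+,f}(\Lambda\text{-}\GrInj)$, that it commutes with $[n]$ via a \emph{closed} natural isomorphism so that Lemma \ref{induce a functor} applies, and that the induced dg functor on orbit categories is quasi-fully faithful; the last point follows from compactness (cohomology commutes with the coproducts defining the orbit morphism complexes), but it should be said explicitly.
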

We give the proof of the above result at the end of this section. As a consequence, we have:

\begin{corollary}\label{version of BGG}
There is an embedding
$$
\D^b(\Lambda\text{-}\gr)/[n]\longrightarrow \D_n(S\text{-}\gr)
$$ 
of the orbit category into its triangulated hull.
\end{corollary}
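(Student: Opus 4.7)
The plan is to chain together three ingredients already established in the paper: the graded analog of the embedding into the triangulated hull furnished by Remark \ref{Grothendieck category}, the periodic Koszul duality of Theorem \ref{Koszul duality}, and the identification of $\D_n(S\text{-}\gr)$ with the compacts of $\D_n(S\text{-}\Gr)$ afforded by the finite global dimension of $S$.

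First, I would verify that $\Lambda\text{-}\Gr$ is a locally noetherian Grothendieck category: it is Grothendieck because the category of graded modules over any graded ring is, and it is locally noetherian because $\Lambda$ is a finite-dimensional (hence graded-noetherian) $k$-algebra, so the set of grading shifts $\{\Lambda(i)\}_{i\in\Z}$ is a generating set of noetherian objects, with $\Lambda\text{-}\gr$ as the subcategory of noetherian objects and $\Lambda\text{-}\GrInj$ as the subcategory of injective objects. Remark \ref{Grothendieck category} then applies to $\A = \Lambda\text{-}\Gr$ and produces an embedding
$$
\overline{\Delta\circ\mathsf{i}}\colon \D^b(\Lambda\text{-}\gr)/[n]\longrightarrow \K_n(\Lambda\text{-}\GrInj)^{\com}
$$
of the orbit category into its triangulated hull.

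Second, Theorem \ref{Koszul duality} provides a triangle equivalence $\K_n(\Lambda\text{-}\GrInj)\simeq \D_n(S\text{-}\Gr)$, which restricts to an equivalence $\K_n(\Lambda\text{-}\GrInj)^{\com}\simeq \D_n(S\text{-}\Gr)^{\com}$ on compact objects. Third, since $S=k[x_1,\ldots,x_c]$ is graded-noetherian with finite graded global dimension equal to $c$, the graded analog of the first assertion of \ref{finite global dimension case} yields a triangle equivalence
$$
\D_n(S\text{-}\gr)\stackrel{\sim}\longrightarrow \D_n(S\text{-}\Gr)^{\com}.
$$
Composing the three gives the sought embedding $\D^b(\Lambda\text{-}\gr)/[n]\to \D_n(S\text{-}\gr)$ into the triangulated hull.

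The only step requiring genuine verification is the third one, namely transplanting Stai's argument to the graded setting: one shows that every object of $\C_n(S\text{-}\gr)$ is quasi-isomorphic to one admitting a finite graded projective flag, so that $\D_n(S\text{-}\gr)$ is generated as a thick subcategory by the compressions $\Delta(S(i))$ of the grading shifts of $S$; combining this with Lemma \ref{description of compact object} and Proposition \ref{embedding}(2) identifies $\D_n(S\text{-}\gr)$ with $\D_n(S\text{-}\Gr)^{\com}$. The first two steps are formal consequences of results already in the paper; the main point to be careful about is simply that all of these ``same proof as in the ungraded case'' transfers go through, which they do because $\Lambda\text{-}\GrInj$ is closed under coproducts (by graded-noetherianness of $\Lambda$) and $S\text{-}\gr$ has enough finitely generated graded projectives of uniformly bounded projective dimension.
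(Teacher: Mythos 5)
Your proposal is correct and follows essentially the same route as the paper's proof: apply Remark \ref{Grothendieck category} to $\Lambda\text{-}\Gr$ to embed $\D^b(\Lambda\text{-}\gr)/[n]$ into $\K_n(\Lambda\text{-}\GrInj)^{\com}$, restrict the periodic Koszul duality of Theorem \ref{Koszul duality} to compacts, and use the graded analog of \ref{finite global dimension case} (together with \ref{Neeman's description of compact objects} and Proposition \ref{embedding}) to identify $\D_n(S\text{-}\gr)$ with $\D_n(S\text{-}\Gr)^{\com}$. The extra care you take in checking that $\Lambda\text{-}\Gr$ is locally noetherian Grothendieck and that Stai's flag argument transfers to the graded setting is exactly what the paper implicitly relies on.
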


Before giving the proof of the corollary, we recall a result.
\begin{chunk}\label{compact in K(GrInj)}
Due to Krause \cite[Proposition 2.3]{Krause}, $\K(\Lambda\text{-}\GrInj)$ is compactly generated. Moreover, the localization functor $\K(\Lambda\text{-}\Gr)\rightarrow \D(\Lambda\text{-}\Gr)$ induces a triangle equivalence
$$
\K(\Lambda\text{-}\GrInj)^{\com}\stackrel{\sim}\longrightarrow \D^b(\Lambda\text{-}\gr).
$$
Its inverse is induced by taking grade-injective resolution, denoted $\mathsf{i}$.
\end{chunk}
\begin{proof}[Proof of Corollary \ref{version of BGG}]
Keep the notation as \ref{compact in K(GrInj)}, Remark \ref{Grothendieck category} implies that the compression 
$$
\overline{\Delta\circ \mathsf{i}}\colon \D^b(\Lambda\text{-}\gr)/[n]\longrightarrow \K_n(\Lambda\text{-}\GrInj)^{\com}
$$
induces an embedding of $\D^b(\Lambda\text{-}\gr)/[n]$ into its triangulated hull. It follows from Theorem \ref{Koszul duality} that $\K_n(\Lambda\text{-}\GrInj)^{\com}$ is triangle equivalent to $\D_n(S\text{-}\Gr)^{\com}$. Choose $\A=S\text{-}\gr$ in \ref{finite global dimension case}, we conclude that 
$\D_n(S\text{-}\gr)$ is the smallest thick subcategory containing $\Delta(S(i))$ for all $i\in \Z$. It is precisely $\D_n(S\text{-}\Gr)^{\com}$; see \ref{Neeman's description of compact objects} and Proposition \ref{embedding}. This completes the proof.
\end{proof}
\begin{chunk}
Recall the functor $\Phi\colon\C(S\text{-}\Gr)\rightarrow\C(\Lambda\text{-}\Gr)$; see \cite{BGS} or \cite{EFS} for more details. Set $(-)^\ast:=\Hom_k(-,k)$. For a graded $S$-module $M=\coprod_{i\in \Z}M_i$, $\Phi(M)$ is defined by the complex
$$
\cdots\xrightarrow{\del}\Lambda^\ast\otimes_kM_{i-1}\xrightarrow {\del} \Lambda^\ast\otimes_k M_i\xrightarrow{\del}\Lambda^\ast\otimes_k M_{i+1}\xrightarrow{\del}\cdots,
$$
where $\del(f\otimes m):=(-1)^{l+i}\sum_{j=1}^c\xi_jf\otimes x_jm$ for $f\in (\Lambda^\ast)_l$ and $m\in M_i$; the sign makes sure that $\del$ is $\Lambda$-linear.
For a complex $M:\cdots\xrightarrow{d}M^{j-1}\xrightarrow dM^j\xrightarrow dM^{j+1}\xrightarrow d\cdots$ in $\C(S\text{-}\Gr)$, $\Phi(M)$ is defined by the total complex of the double complex
\begin{equation}\label{total complex}
    \xymatrix{
&\vdots\ar[d]^-{1\otimes d}&\vdots\ar[d]^-{1\otimes d}&\\
\cdots\ar[r]^-{\del}& \Lambda^\ast\otimes_k M^j_i\ar[r]^-{\del}\ar[d]^-{1\otimes d}& \Lambda^\ast\otimes_k  M^j_{i+1}\ar[d]^-{1\otimes d}\ar[r]^-{\del}&\cdots\\
\cdots\ar[r]^-{\del}&\Lambda^\ast\otimes_k  M^{j+1}_i\ar[r]^-{\del}\ar[d]^-{1\otimes d}&     \Lambda^\ast\otimes_k M^{j+1}_{i+1}\ar[r]^-{\del}\ar[d]^-{1\otimes d}&\cdots\\
&\vdots&\vdots&
}
\end{equation}
where the $l$-th component of $\Phi(M)$ is $\coprod_{i+j=l}\Lambda^\ast\otimes_k M_i^j$. 
\end{chunk}

\begin{chunk}\label{classical BGG}
Keep the notation as above. 
Since $\Phi$ preserves homotopy, suspensions and mapping cones, it induces an exact functor $\Phi\colon \K(S\text{-}\Gr)\rightarrow \K(\Lambda\text{-}\Gr)$. The image of this functor lies in $\K(\Lambda\text{-}\GrInj)$ because $\Lambda^\ast$ is graded-injective.
Bernstein, Gel'fand and Gel'fand \cite[Theorem 3]{BGG} proved that $\Phi$ naturally induces a triangle equivalence
$$
\Phi\colon \D^b(S\text{-}\gr)\stackrel{\sim}\longrightarrow \D^b(\Lambda\text{-}\gr);
$$
see also \cite[Theorem 2.12.1]{BGS}.
This is known as the BGG correspondence. Moreover, it fits into the following commutative diagram
\begin{equation}\label{lifting}
     \xymatrix{
    \D^b(S\text{-}\gr)\ar[r]^-{\sim}\ar[d]_-{\mathrm{inc}}& \D^b(\Lambda\text{-}\gr )\ar[d]^-{\mathsf{i}}\\
    \D(S\text{-}\Gr)\ar[r]& \K(\Lambda\text{-}\GrInj),
    }
\end{equation}
where the bottom map is the following composition
$$
\D(S\text{-}\Gr)\stackrel{\mathsf{p}}\longrightarrow  \K(S\text{-}\Gr)\stackrel{\Phi}\longrightarrow \K(\Lambda\text{-}\GrInj),
$$
here $\mathsf{p}$ is the left adjoint of the localization functor $\K(S\text{-}\Gr)\rightarrow \D(S\text{-}\Gr)$ (see \cite[Proposition 2.12]{BN} for its exisence).

The essential images of the vertical functors in (\ref{lifting}) are precisely the full subcategories of compact objects in the bottom categories. 
This is clear for the left one as the global dimension of $S$ is finite. See \ref{compact in K(GrInj)} for the right one.
Combine with that $\Phi\circ \mathsf{p}$ preserves coproducts, Lemma \ref{test equivalence} yields $\Phi\circ \mathsf{p}$ is an equivalence; see \cite[Example 5.7]{Krause}.
\end{chunk}
Now we define the exact functor $ \D_n(S\text{-}\Gr)\rightarrow \K_n(\Lambda\text{-}\GrInj).$
\begin{chunk}
For a $n$-periodic complex $M\in \C_n(S\text{-}\Gr)$, the total complex $\Phi(M)$ (see (\ref{total complex})) is a $n$-periodic complex in $\C_n(\Lambda\text{-}\Gr)$. Therefore this gives a  functor
$$
\Phi^\prime\colon \C_n(S\text{-}\Gr)\longrightarrow \C_n(\Lambda\text{-} \Gr)
$$
which maps $M$ to $\Phi(M)$.
Also, $\Phi^\prime$ induces an exact functor $\Phi^\prime\colon \K_n(S\text{-}\Gr)\rightarrow \K_n(\Lambda\text{-}\Gr)$ between the homotopy categories and its image lies in $\K_n(\Lambda\text{-}\GrInj)$. Consider the following composition
$$
\D_n(S\text{-}\Gr)\stackrel{\mathsf{p}^\prime}\longrightarrow \K_n(S\text{-} \Gr)\stackrel{\Phi^\prime}\longrightarrow  \K_n(\Lambda\text{-}\GrInj),
$$
where $\mathsf{p}^\prime$ is the left adjoint of the localization functor $\K_n(S\text{-}\Gr)\rightarrow \D_n(S\text{-}\Gr)$; its existence can refer the non-graded version of Remark \ref{homotopy projective}.
\end{chunk}

\begin{proof}[Proof of Theorem \ref{Koszul duality}]
It follows from Proposition \ref{embedding} and \ref{compact in K(GrInj)} that $\D_n(S\text{-}\Gr)$ and $\K_n(\Lambda\text{-}\GrInj)$ are compactly generated triangulated categories.
Combine with \ref{classical BGG}, we observe that there exists a commutative diagram
$$
\xymatrix{
\D(S\text{-}\Gr)\ar[r]^-{\Phi\circ \mathsf{p}}_-{\sim} \ar[d]_-{\Delta} & \K(\Lambda\text{-}\GrInj )\ar[d]^-{\Delta}\\
\D_n(S\text{-}\Gr)\ar[r]^-{\Phi^\prime\circ \mathsf{p}^\prime}& \K_n( \Lambda\text{-}\GrInj).
}
$$
Since $\Phi^\prime\circ \mathsf{p}^\prime$ preserves coproducts, Proposition \ref{embedding} and Lemma \ref{induce equivalence} imply $\Phi^\prime\circ \mathsf{p}^\prime$ is a triangle equivalence.
\end{proof}

\bibliographystyle{amsplain}
\bibliography{ref}

\providecommand{\bysame}{\leavevmode\hbox to3em{\hrulefill}\thinspace}
\providecommand{\MR}{\relax\ifhmode\unskip\space\fi MR }
\providecommand{\MRhref}[2]{%
  \href{http://www.ams.org/mathscinet-getitem?mr=#1}{#2}
}
\providecommand{\href}[2]{#2}
\begin{thebibliography}{10}

\bibitem{ABI}
Luchezar Avramov, Ragnar-Olaf Buchweitz, and Srikanth Iyengar, \emph{Class and
  rank of differential modules}, Inventiones Mathematicae \textbf{169} (2000),
  285--318.

\bibitem{ABIM}
Luchezar Avramov, Ragnar-Olaf Buchweitz, Srikanth Iyengar, and Claudia Miller,
  \emph{Homology of perfect complexes}, Advances in Mathematics \textbf{223}
  (2010), 1731--1781.

\bibitem{Beilinson}
Alexander Beilinson, \emph{Coherent sheaves on {P}n and problems of linear
  algebra}, Functional Analysis and Its Applications \textbf{12} (1978),
  214--216.

\bibitem{BBD}
Alexander Beilinson, Joseph Bernstein, and Pierre Deligne, \emph{Faisceaux
  pervers}, Astérisque \textbf{100} (1982), 5--171.

\bibitem{BGS}
Alexander Beilinson, Victor Ginzburg, and Wolfgang Soergel, \emph{Koszul
  duality patterns in representation theory}, Journal of the American
  Mathematical Society \textbf{9} (1996), 473--527.

\bibitem{BIK}
Dave Benson, Srikanth Iyengar, and Henning Krause, \emph{Stratifying modular
  representations of finite groups}, Annals of Mathematics \textbf{174} (2011),
  1643--1684.

\bibitem{BGG}
Joseph Bernstein, Israel Gel'fand, and Sergei Gel'fand, \emph{Algebraic bundles
  over {P}n and problems of linear algebra}, Functional Analysis and Its
  Applications \textbf{12} (1978), 212--214.

\bibitem{BN}
Marcel B\"okstedt and Amnon Neeman, \emph{Homotopy limits in triangulated
  categories}, Compositio Mathematica \textbf{86} (1993), 209--234.

\bibitem{BH}
Winfried Bruns and H.~Jürgen Herzog, \emph{Cohen-{M}acaulay rings}, 2 ed.,
  Cambridge Studies in Advanced Mathematics, vol.~39, Cambridge University
  Press, 1998.

\bibitem{BMRRT}
Aslak~Bakke Buan, Robert Marsh, Markus Reineke, Idun Reiten, and Gordana
  Todorov, \emph{Tilting theory and cluster combinatorics}, Advances in
  Mathematics \textbf{204} (2006), 572--618.

\bibitem{Buchweitz}
Ragnar-Olaf Buchweitz, \emph{Maximal {C}ohen–{M}acaulay modules and {T}ate
  cohomology over {G}orenstein rings},  (1987), 1--155,
  \url{https://tspace.library.utoronto.ca/handle/1807/16682}.

\bibitem{CE}
Henry Cartan and Samuel Eilenberg, \emph{Homological algebra}, Princeton
  University Press, 1956.

\bibitem{CLW}
Xiao-Wu Chen, Jian Liu, and Ren Wang, \emph{Singular equivalences induced by
  bimodules and quadratic monomial algebras}, arXiv preprint arXiv:2009.09356
  (2020), \url{https://arxiv.org/abs/2009.09356}.

\bibitem{Drinfeld}
Vladimir Drinfeld, \emph{{DG} quotients of {DG} categories}, Journal of Algebra
  \textbf{272} (2004), 643--691.

\bibitem{EFS}
David Eisenbud, Gunnar Fløystad, and Frank-Olaf Schreyer, \emph{Sheaf
  cohomology and free resolutions over exterior algebras}, Transactions of the
  American Mathematical Society \textbf{355} (2003), 4397--4426.

\bibitem{EJ}
Edgar~E. Enochs and Overtoun M.~G. Jenda, \emph{Relative homological algebra},
  De Gruyter Expositions in Mathematics, vol.~30, De Gruyter, 2011.

\bibitem{Happel87}
Dieter Happel, \emph{On the derived category of a finite-dimensional algebra},
  Commentarii mathematici Helvetici \textbf{62} (1987), 339--389.

\bibitem{IY}
Srikanth Iyengar and Henning Krause, \emph{Acyclicity versus total acyclicity
  for complexes over {N}oetherian rings}, Documenta Mathematica \textbf{11}
  (2006), 207--240.

\bibitem{ILLP}
Srikanth Iyengar, Janina Letz, Jian Liu, and Josh Pollitz, \emph{Exceptional
  complete intersection maps of local rings}, arXiv preprint arXiv:2107.07354
  (2021), \url{https://arxiv.org/abs/2107.07354}.

\bibitem{KY}
Martin Kalck and Dong Yang, \emph{Derived categories of graded gentle one-cycle
  algebras}, Journal of Pure and Applied Algebra \textbf{222} (2018),
  3005--3035.

\bibitem{Keller94}
Bernhard Keller, \emph{Deriving {DG} categories}, Annales Scientifiques de
  l’École Normale Supérieure \textbf{1} (1994), 63--102.

\bibitem{Keller99}
\bysame, \emph{On the cyclic homology of exact categories}, Journal of Pure and
  Applied Algebra \textbf{136} (1999), 1--56.

\bibitem{Keller05}
\bysame, \emph{On triangulated orbit categories}, Documenta Mathematica
  \textbf{10} (2005), 551--581.

\bibitem{Keller09}
\bysame, \emph{Corrections to `on triangulated orbit categories'},  (2009),
  \url{https://webusers.imj--prg.fr/~bernhard.keller/publ/index.html}.

\bibitem{Krause}
Henning Krause, \emph{The stable derived category of a {N}oetherian scheme},
  Compositio Mathematica \textbf{141} (2005), 1128--1162.

\bibitem{krause2010}
\bysame, \emph{Localization theory for triangulated categories}, London
  Mathematical Society Lecture Note Series, vol. 375, in Triangulated
  Categories, pp.~161--235, Cambridge University Press, 2010.

\bibitem{Neeman92}
Amnon Neeman, \emph{The connection between the {K}-theory localization theorem
  of {T}homason, {T}robaugh and {Y}ao and the smashing subcategories of
  {B}ousfield and {R}avenel}, Annales Scientifiques de l’École Normale
  Supérieure \textbf{25} (1992), 547--566.

\bibitem{Neeman96}
\bysame, \emph{The {G}rothendieck duality theorem via {B}ousfield's techniques
  and {B}rown representability}, Journal of the American Mathematical Society
  \textbf{9} (1996), 205--236.

\bibitem{Neeman01}
\bysame, \emph{Triangulated categories}, vol. 148, Princeton University Press,
  2001.

\bibitem{Orlov2009}
Dmitri Orlov, \emph{Derived categories of coherent sheaves and triangulated
  categories of singularities}, Progress in Mathematics, vol. 270, in Algebra,
  Arithmetic, and Geometry: Volume II: In Honor of Yu. I. Manin, pp.~503--531,
  Birkh{\"a}user Boston, 2009.

\bibitem{PX1997}
Liangang Peng and Jie Xiao, \emph{Root categories and simple lie algebras},
  Journal of Algebra \textbf{198} (1997), 19--56.

\bibitem{PX2000}
\bysame, \emph{Triangulated categories and {K}ac-{M}oody algebras}, Inventiones
  Mathematicae \textbf{140} (2000), 563--603.

\bibitem{Rickard1989}
Jeremy Rickard, \emph{Morita theory for derived categories}, Journal of the
  London Mathematical Society \textbf{39} (1989), 436--456.

\bibitem{Rickard1991}
\bysame, \emph{Derived equivalences as derived functors}, Journal of the London
  Mathematical Society \textbf{43} (1991), 37--48.

\bibitem{RZ}
Claus~Michael Ringel and Pu~Zhang, \emph{Representations of quivers over the
  algebra of dual numbers}, Journal of Algebra \textbf{475} (2017), 327--360.

\bibitem{Stai}
Torkil Stai, \emph{The triangulated hull of periodic complexes}, Mathematical
  Research Letters \textbf{25} (2015), 199--236.

\bibitem{TH}
Xi~Tang and Zhaoyong Huang, \emph{Higher differential objects in additive
  categories}, Journal of Algebra \textbf{549} (2020), 128--164.

\bibitem{Zhao}
Xiaojuan Zhao, \emph{A note on the equivalence of m-periodic derived
  categories}, Science China Mathematics \textbf{57} (2014), 2329--2334.

\end{thebibliography}
\end{document}